

\documentclass[12pt,a4paper]{amsart}
\usepackage{amssymb,stmaryrd}
\usepackage[a4paper,left=2.25cm,right=2.25cm,top=3cm,bottom=3cm,headsep=1cm]{geometry}
\usepackage[mathbf]{euler} 
\usepackage[applemac]{}
\usepackage{bm}
\usepackage{blindtext}
\usepackage{amsmath}
\usepackage[toc,page]{appendix} 
\usepackage{amsfonts}
\usepackage{amsthm}
\usepackage{caption}
\usepackage{color}
\usepackage{shadethm}
\usepackage{subfigure}
\usepackage{amsmath,amscd,tikz-cd, mathrsfs, amssymb, tikz, listings}
\usepackage{amssymb,amsthm, xcolor}
\usepackage[
backend=biber,
style=numeric,
sorting=nyt
]{biblatex}
\addbibresource{refs.bib}
\usepackage[utf8]{inputenc}
\usepackage{csquotes}
\usepackage[english]{babel}
\usepackage[colorlinks = true,
            linkcolor = blue,
            urlcolor  = blue,
            citecolor = blue,
            anchorcolor = blue]{hyperref}

\usepackage{epigraph}
\usepackage{listings}
\usepackage{color}

\definecolor{dkgreen}{rgb}{0,0.6,0}
\definecolor{gray}{rgb}{0.5,0.5,0.5}
\definecolor{mauve}{rgb}{0.58,0,0.82}

\lstset{frame=tb,
  language=Python,
  aboveskip=3mm,
  belowskip=3mm,
  showstringspaces=false,
  columns=flexible,
  basicstyle={\small\ttfamily},
  numbers=none,
  numberstyle=\tiny\color{gray},
  keywordstyle=\color{blue},
  commentstyle=\color{dkgreen},
  stringstyle=\color{mauve},
  breaklines=true,
  breakatwhitespace=true,
  tabsize=3
}



\usepackage{enumerate}
\usepackage{verbatim}
\usetikzlibrary{trees}
\usetikzlibrary{decorations.markings}
\usetikzlibrary{arrows}
\usepackage{mathrsfs}
\usepackage{url}

\makeatletter
\def\@tocline#1#2#3#4#5#6#7{\relax
  \ifnum #1>\c@tocdepth 
  \else
    \par \addpenalty\@secpenalty\addvspace{#2}%
    \begingroup \hyphenpenalty\@M
    \@ifempty{#4}{%
      \@tempdima\csname r@tocindent\number#1\endcsname\relax
    }{%
      \@tempdima#4\relax
    }%
    \parindent\z@ \leftskip#3\relax \advance\leftskip\@tempdima\relax
    \rightskip\@pnumwidth plus4em \parfillskip-\@pnumwidth
    #5\leavevmode\hskip-\@tempdima
      \ifcase #1
       \or\or \hskip 1em \or \hskip 2em \else \hskip 3em \fi%
      #6\nobreak\relax
    \hfill\hbox to\@pnumwidth{\@tocpagenum{#7}}\par
    \nobreak
    \endgroup
  \fi}
\makeatother

\theoremstyle{plain}
   \newtheorem{theorem}{Theorem}[section]

   \newtheorem{corollary}[theorem]{Corollary}
   \newtheorem{porism}[theorem]{Porism}
   
\theoremstyle{definition}
    \newtheorem{definition}[theorem]{Definition}
    \newtheorem{conjecture}{Conjecture}
    \newtheorem{question}{Question}
    
    \newtheorem{lemma}[theorem]{Lemma}
\newtheorem{example}[theorem]{Example}
\definecolor{green}{RGB}{34, 139, 34}

\theoremstyle{remark}
\newtheorem{remark}{Remark}

\newcommand{\MM}{\mathcal{M}}

\newcommand{\NN}{\mathbb{N}}

\newcommand{\ZZ}{\mathbb{Z}}

\newcommand{\RR}{\mathbb{R}}

\renewcommand{\bar}{\overline}

\newcommand{\im}{\text{im }}

\renewcommand{\phi}{\varphi}
\newcommand{\supp}{\text{supp}}

\renewcommand{\emph}{\textit}
\newcommand{\at}{\color{red}}
\newcommand{\rmat}{\color{black}}

\makeatletter

\pgfdeclareshape{genus}{
 \anchor{center}{\pgfpointorigin}
\backgroundpath{
    \begingroup
    \tikz@mode
    \iftikz@mode@fill

         \pgfpathmoveto{\pgfqpoint{-0.78cm}{-.17cm}}
     \pgfpathcurveto %
            {\pgfpoint{-0.35cm}{-.44cm}}
        {\pgfpoint{0.35cm}{-.44cm}}
        {\pgfpoint{.78cm}{-0.17cm}} 
     \pgfpathmoveto{\pgfqpoint{-0.78cm}{-0.17cm}}
     \pgfpathcurveto %
            {\pgfpoint{-0.25cm}{.25cm}}
        {\pgfpoint{.25cm}{.25cm}}
        {\pgfpoint{0.78cm}{-0.17cm}}
        \pgfusepath{fill}
        \fi

        \iftikz@mode@draw
     \pgfpathmoveto{\pgfqpoint{-1cm}{0cm}}
     \pgfpathcurveto %
            {\pgfpoint{-0.5cm}{-.5cm}}
        {\pgfpoint{0.5cm}{-.5cm}}
        {\pgfpoint{1cm}{0cm}}

     \pgfpathmoveto{\pgfqpoint{-0.75cm}{-0.15cm}}
     \pgfpathcurveto %
            {\pgfpoint{-0.25cm}{.25cm}}
        {\pgfpoint{.25cm}{.25cm}}
        {\pgfpoint{0.75cm}{-0.15cm}}
              \pgfusepath{stroke}
        \fi
        \endgroup
    }
    }

    \makeatother

\graphicspath{{figures/}}
\PassOptionsToPackage{linktocpage}{hyperref} 

\begin{document}

\title{On discrete gradient vector fields and Laplacians \linebreak of simplicial complexes}
\author[I. Contreras]{Ivan Contreras}
\author[A. R. Tawfeek]{Andrew R. Tawfeek}
\email[I.~Contreras]{icontreraspalacios@amherst.edu}
\email[A. R. ~Tawfeek]{atawfeek@uw.edu}
\maketitle

\begin{abstract}
Discrete Morse theory, a cell complex-analog to smooth Morse theory allowing homotopic tools in the discrete realm, has been developed over the past few decades since its original formulation by Robin Forman in 1998. In particular, discrete gradient vector fields on simplicial complexes capture important topological features of the structure. We prove that the characteristic polynomials of the Laplacian matrices of a simplicial complex are generating functions for discrete gradient vector fields if the complex is a triangulation of an orientable manifold. Furthermore, we provide a full characterization of the correspondence between rooted forests in higher dimensions and discrete gradient vector fields.
\end{abstract}


\section{Introduction}
Since the appearance of discrete Morse theory due to Forman in \cite{forman}, it has brought a wide range of applications of this combinatorial model of Morse theory to mathematics and the sciences alike. Some of the recent applications include cellular sheaf cohomology \cite{sheaf}, topology of random points \cite{randomcomplexes}, and studying human speech from statistical data \cite{humanspeech}.

Here, we further demonstrate the importance of discrete Morse theory by showing that discrete gradient fields have a novel intimate connection with the topological structure of a simplicial complex as characterized by its Laplacian $\Delta_d :=\partial_d\partial_d^T$. The discrete Laplacian plays an important role in various subjects such as chip-firing on graphs \cite{chip}, critical groups in higher dimensions \cite{critgroups}, and has even been generalized to matroids \cite{matroidlap}.

In particular, we prove (Theorem \ref{main}) that when $K$ is a triangulation of an orientable $d$-manifold, we have that
\begin{equation*}
    det( \Delta_d + \lambda I) = \sum_{i=0}^{n} |\mathcal{M}_{m-i} (K)| \lambda^{n-i},
\end{equation*}
where $m = |K_d|$ and $n = |K_{d-1}|$. Here, $\MM_\ell(K)$ denote the set of gradients on $K$ having $\ell$-many critical cells in dimension $d$ and all cells critical in dimension $n$ for $n < d-1$. In the graph-theoretic case the above statement holds for simple graphs (Theorem \ref{charpoly}).

\

This paper is organized as follows. In Section 2 we review the background and notation concerning simplicial complexes, higher dimensional rooted forests, and discrete Morse theory. In Section 3 we prove a special case of our main statement exclusively in the $1$-dimensional case, providing a graph theoretic interpretation of the main result, and motivating our methods in the general case. In Section 4, we conclude by proving the general statement, and then we present some future directions in Section 5.

\vspace{.5in}

\subsection*{Keywords} discrete Laplacian, simplicial complexes, discrete Morse theory, discrete gradient vector fields, matchings, rooted forests, spectral graph theory.
\section*{Acknowledgements}
We particularly thank Alejandro Morales for his valuable suggestions, comments and guidance during the project. We would also extend our thanks to David A. Cox, Michael Ching, Nathan Pflueger and Nicholas A. Scoville, for very helpful discussions and suggestions. The second author was partially supported by the Gregory S. Call Research Fellowship.

\section{Background and Notation}

\subsection{Simplicial complexes} A \textit{simplicial complex}\footnote{All simplicial complexes considered are finite-dimensional and connected, unless stated otherwise.} on $n$ vertices is a collection $K$ of subsets of $[n] := \{1,2, \dots, n\}$ which is closed under taking subsets, i.e.\ if $f \in K$ and $g \subseteq f$, then $g \in K$. An element $f \in K$ is called a $d$-dimensional \textit{face} or \textit{cell} (or $d$-face and $d$-cell) if $|f| = d + 1$. We use $K_d$ to denote the subset of $K$ consisting of only $d$-cells and $K_{\leq d}$ to denote the collection of all cells of dimension $\leq d$, called the $d$-\textit{skeleton}. When $K$ consists of all subsets of $[n]$, we call it a $n$-simplex, and denote it by $\Delta^{n}$. If $g \subsetneq f$ in $K$, we say $g$ is a \textit{face} of $f$ and write $g<f$ (with the notation chosen due to the later discussed poset structure on cells).  Cells $f \in K$ that are contained in no other cell of $K$ are said to be \textit{maximal}, and we say $K$ is an $n$-dimensional simplicial complex if its maximal cells are at most $n$-dimensional. For instance, a \textit{graph} is a $1$-dimensional simplicial complex.

Each $n$-dimensional simplicial complex has a \textit{geometric realization} as a subspace of $\mathbb{R}^{n+1}$, in which each $d$-face is represented by the convex hull of $d+1$ affinely independent points.

\begin{center}
    \captionsetup{type=figure}
    \includegraphics[width=.75\linewidth]{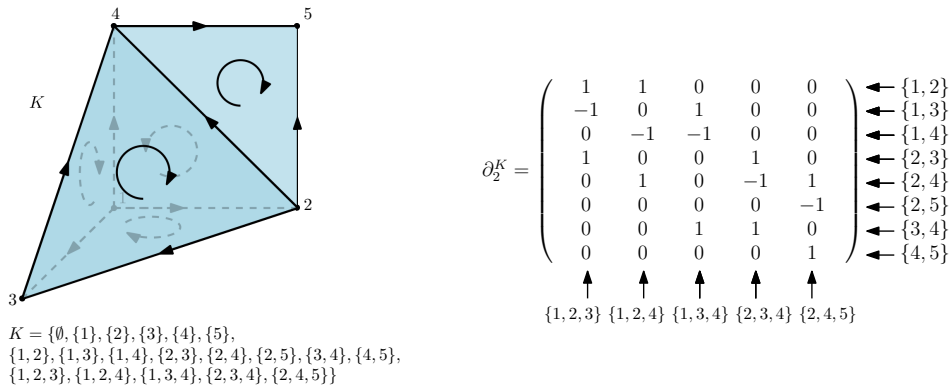} \label{mobfig}
    \captionof{figure}{An example of a $2$-dimensional simplicial complex $K$, with the standard lexicographic orientations indicated, and the the corresponding incidence matrix $\partial^K_2$.} 
\end{center}

To each simplicial complex $K$ we associate a chain complex $C_\bullet(K)$ of abelian groups\footnote{We will always consider $\ZZ$-modules.}
\[\begin{tikzcd}
	\cdots & {C_{d+1}(K)} & {C_d(K)} & {C_{d-1}(K)} & \cdots
	\arrow["{\partial^K_d}", from=1-3, to=1-4]
	\arrow[from=1-4, to=1-5]
	\arrow[from=1-1, to=1-2]
	\arrow["{\partial^K_{d+1}}", from=1-2, to=1-3]
\end{tikzcd}\]

\noindent where we let $C_d(K)$, for $d \geq 0$, denote the free abelian group generated by all orientations of all simplices of $K$ of dimension $d$, under the quotient by the relation that the sum of two opposite orientations is zero. The map $\partial^K_d$ (or just $\partial_d$ if no ambiguity will arise) is called the \textit{$d$th}\textit{-boundary map}, which we routinely identify with its corresponding matrix, called the \textit{$d$th}\textit{-incidence matrix} of the simplicial complex $K$, defined as follows:
\begin{itemize}
    \item the rows of $\partial_d$ are indexed by the $(d-1)$-faces and the columns are indexed by the $d$-faces,
    \item the entry $\partial_d[r,f]$ corresponding to a $(d-1)$-face $r$ and a $d$-face $f$ is $(-1)^j$ if $f = \{v_0,v_1,\dots,v_d\}$ with $v_0 < v_1 <\cdots <v_d$ and $r = f \setminus \{v_j\}$ for some $j$, and $0$ otherwise.
\end{itemize}

It is a standard exercise to show that $\partial_{d-1}\circ\partial_d = 0$. Elements of $C_n(K)$ are said to be \textit{$n$-chains}. A $n$-cycle of $K$ is a non-zero element of $C_n(K)$ that is in $\ker \partial_n$. Since for any $n \geq 1$ the image of an element $\tau \in C_n(K)$ is necessarily sent to the kernel of $\partial_{n-1}$, we say a $(n-1)$-cycle is a \textit{$K$-boundary} if it is inside $\im \partial_n$. 

We define $H_d(K)$ to be the $d$th integral homology group of the simplicial complex $K$ (or equivalently, of the chain complex $C_\bullet(K)$), given by
$$H_{d}(K) = \frac{\ker \partial_d}{\im \partial_{d+1}}.$$
This group is in-fact finitely generated, so we have by the fundamental theorem of finitely generated abelian groups that there exists primes $p_1,\dots, p_s$ and positive integers $n_1, \dots, n_s$ such that
$$H_d(K) \cong \mathbb{Z}^r \times \mathbb{Z}/{p_1^{n_1}}\ZZ \times \cdots \times \mathbb{Z}/{p_s^{n_s}}\ZZ,$$
and that this decomposition is uniquely determined by the group $H_d(K)$ (up to permutation of the factors). The value $r$ is called the \textit{rank} or \textit{Betti number}, denoted by $\beta_d(K)$ (or just $\beta_d$, if no ambiguity would arise). The product of the corresponding finite groups is called the \textit{torsion} of $H_d(K)$. If $H_d(K)$ has no finite subgroups, we say $K$ is \textit{torsion-free} in dimension $d$, or that $H_d(K)$ is \textit{torsion-free}.


Similarly, we have that the $d$th homology group of $K$ \textit{relative to a subcomplex} $A$ is as follows: given a subcomplex $A \subseteq K$, as the boundary maps $\partial^K_n: C_n(K) \to C_{n-1}(K)$ of $C_\bullet(K)$ leave $C_\bullet(A)$ invariant\footnote{That is to say, $\partial_n^K|_{C_{n}(A)}$ maps $C_{n}(A)$ to $C_{n-1}(A)$, and hence is precisely (after a restriction of the codomain of $\partial_n^K|_{C_{n}(A)}$ to $C_{n-1}(A) \subseteq C_{n-1}(K)$) $\partial_n^A$ of $C_\bullet(A)$.}, this induces the a short exact sequence 
$$0 \rightarrow C_\bullet(A) \rightarrow C_\bullet(K) \rightarrow C_\bullet(K)/C_\bullet(A) \rightarrow 0$$
between the chain complexes, where the boundary maps of $C_\bullet(K)/C_\bullet(A)$, 
$$\partial^*_n: C_{n}(K)/C_{n}(A) \to C_{n-1}(K)/C_{n-1}(A),$$
are given by $\partial^*_n(\sigma + C_{n}(A)) = \partial_n(\sigma) + C_{n-1}(A)$. We classically denote $C_\bullet(K)/C_\bullet(A)$ by $C_n(K,A)$, and write $H_d(K,A)$ to denote the $d$th homology group of $K$ relative to $A$, defined\footnote{For the readers familiar with CW complexes, a perhaps more geometrically intuitive way to visualize this group is to consider $K$ to be a CW complex, in which case due to the equivalence of cellular and simplicial homology (see \cite{hatcher}), we have that $H_d(K,A) \cong H_d(K/A)$ for $d \geq 1$ and $H_0(K,A) \cong H_0(K/A) \oplus \ZZ$.} to be the $d$th homology group of the chain complex $C_\bullet(K,A)$.

Let us now introduce the object of our main interest: the \textit{$d$th Laplacian} of a simplicial complex $K$ of dimension $d$, defined to be the $|K_{d-1}| \times |K_{d-1}|$ -matrix
$$\Delta^K_d = \partial_d \cdot \partial_d^T.$$
The rows and columns of $\Delta^K_d$ (or just $\Delta_d$, if no ambiguity would arise) are indexed by $(d-1)$-faces. The $(r,s)$th entry is given as follows.

$$\Delta_d^K[r,s] = 
\begin{cases}
\left| \coprod_{\sigma \in K_d} \sigma \cap \{r\} \right| &\text{if $r=s$}\\
(-1)^{i+j} &\text{if $r \neq s$ and $\exists f \in K_d$ such that $r = f \setminus \{v_i\}$ and $s = f \setminus \{v_j\}$}\\
0 &\text{otherwise ($r,s$ not incident to a common $d$-face).}
\end{cases}$$
In the case that $K$ is a $1$-dimensional simplicial complex, i.e.\ a graph, one often writes $\Delta_1$ as just $\Delta$ and explicitly calls it the \textit{graph Laplacian}.

\subsection{Rooted forests}

We now overview some of the basic notions on rooted forests, as introduced in \cite{higherdimforests}, which will be important for obtaining our higher dimensional results.

\begin{definition}
Let $K$ be a $d$-dimensional complex, and let $F$ be a subset of the $d$-faces. We call $F$ a \textit{forest} if the corresponding columns of $\partial_d^K$ are independent.
\end{definition}

Equivalently, note that a collection of $d$-face $F$ of a $d$-dimensional simplicial complex $K$ is a forest if and only if $\partial^F_d$ is injective.

\begin{remark} \label{matroids}
As pointed out in \cite{higherdimforests}, there is a connection between forests and matroids. For a $d$-dimensional simplicial complex $K$, the linear matroid of $\partial^K_d$, called the simplicial matroid $M_K$ of $K$, is the matroid having the ground set of $d$-faces and independent sets corresponding to forests. The bases of $M_K$ are the spanning forests of $K$: the $d$-faces corresponding to columns of $\partial^K_d$ that form a basis of the columns.
\end{remark}

\begin{definition}
Let $K$ be a $d$-dimensional complex. Let $R$ be a subset of the $(d-1)$-faces, and let $\bar{R}=K_{d-1}\setminus R$. We say that $R$ is \textit{relatively-free} if the rows of $\partial_d^K$ corresponding to the faces in $\bar{R}$ are of maximal rank. We say that $R$ is \textit{relatively-generating} if the rows of $\partial^K_d$ corresponding to the faces of $\bar{R}$ are independent. We say $R$ is a \textit{root} of $K$ if it is both relatively-free and relatively-generating, that is, the rows of $\partial_d^K$ corresponding to the faces in $\bar{R}$ form a basis of the rows.
\end{definition}

A more geometric view is as follows: given a subset of the $(d-1)$-faces of a $d$-dimensional complex $K$, we have that $R$ is \textit{relatively-free} if and only if $R$ contains no $K$-boundary (i.e. $C_d(R)$ contains no $K$-boundary). In a similar vein, $R$ is \textit{relatively-generating} if and only if for all cells $\sigma \in \bar{R}$, we have that $R \cup \{\sigma\}$ contains a $K$-boundary.

The conditions of being relatively-free and relatively-generating are natural generalizations of the properties of roots in the graph-theoretical case. A root $R$ being relatively-free generalizes the condition of containing \textit{at most} one vertex per connected component of a forest, and $R$ being relatively-generating generalizes the condition of having \textit{at least} one vertex per connected component in a forest.

\begin{definition}
Let $K$ be a $d$-dimensional simplicial complex. A \textit{rooted forest} of $K$ is a pair $(F,R)$ where $F$ is a $d$-dimensional forest of $K$, and $R$ is a root of $F$. 
\end{definition}

Lastly, given a rooted forest $(F,R)$, a \textit{fitting orientation} of $(F,R)$ is a bijection $\psi$ between $\bar{R} := K_{d-1}\setminus R$ and $F$, such that each face $r \in \bar{R}$ is mapped to a face $f \in F$ containing $r$. This generalizes the (unique) fitting orientation on a rooted forest of a graph, where each edge is oriented towards the root of its connected component.

\

\begin{center}
    \captionsetup{type=figure}
    \includegraphics[width=.85\linewidth]{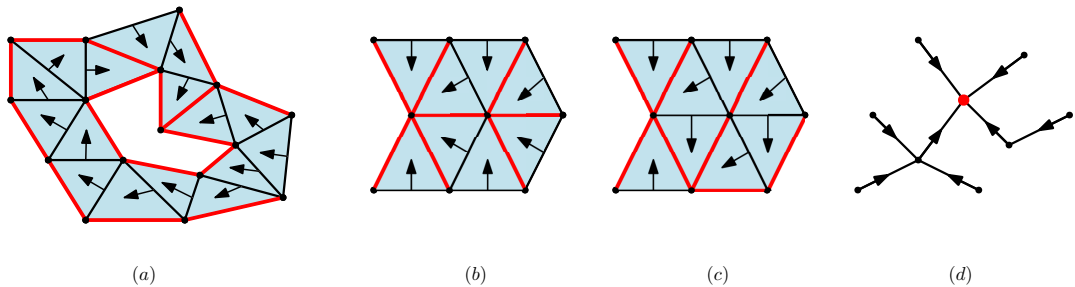} \label{rforest_ex}
    \captionof{figure}{Examples of rooted forests. The red $1$-cells in figures (a) -- (c) and red vertex in (d) denote the roots of the corresponding forest. The arrows represent fitting orientations.} 
\end{center}

\

A classic result of algebraic graph theory is that the Laplacian and rooted forests are intimately connected. Let $\Gamma = (V_\Gamma, E_\Gamma)$ be a graph on $n$ vertices having $m$ edges. Recall that an \textit{edge-subgraph} of $\Gamma$ is constructed by taking a subset $S \subseteq E_\Gamma$ together with all the vertices of $\Gamma$ incident with some edge belonging to $S$.

\begin{theorem}[Biggs 1974]\label{biggsthm}
The coefficients $q_i$ of the characteristic polynomial $\det(\Delta- \lambda I) = \sum_{i=0}^{n} (-1)^i q_{i}\lambda^{n-i}$, where $\Delta$ is the graph Laplacian, are given by the formula
$$q_i = \sum_{F \in \Omega_i} |V_F| \qquad (1 \leq i \leq n),$$
where $\Omega_i$ consists of all the edge-subgraphs of $\Gamma$ which have $i$ edges and are forests.
\end{theorem}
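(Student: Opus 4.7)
My plan is to combine the standard principal-minor expansion of the characteristic polynomial with the Cauchy-Binet formula applied to the factorization $\Delta = \partial\partial^T$. Expanding $\det(\Delta - \lambda I)$ by multilinearity of the determinant identifies, up to an overall sign, the coefficient $q_i$ with the sum of all $i \times i$ principal minors of $\Delta$, namely
\begin{equation*}
  q_i \;=\; \sum_{\substack{S \subseteq V_\Gamma \\ |S|=i}} \det(\Delta_S),
\end{equation*}
where $\Delta_S$ denotes the principal submatrix indexed by a size-$i$ vertex subset $S$.

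Writing $\partial_S$ for the restriction of the incidence matrix to the rows indexed by $S$, I would then observe that $\Delta_S = \partial_S\,\partial_S^T$ and apply Cauchy-Binet to obtain
\begin{equation*}
  \det(\Delta_S) \;=\; \sum_{\substack{T \subseteq E_\Gamma \\ |T|=i}} \det(\partial_{S,T})^2,
\end{equation*}
where $\partial_{S,T}$ is the $i \times i$ submatrix of $\partial$ with rows indexed by $S$ and columns indexed by $T$. The combinatorial heart of the argument is then the identification of the nonzero terms of this double sum. It is classical that a set of columns of a signed incidence matrix is linearly independent if and only if the corresponding edges contain no cycle, so the columns indexed by $T$ are independent precisely when $T$ is the edge set of a forest $F$. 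In that case, the row null space of $\partial_{*,T}$ is spanned by the indicator vectors of the components of the subgraph $(V_\Gamma, T)$ (counting isolated vertices as singleton components); hence $\det(\partial_{S,T})$ is nonzero exactly when $S$ omits exactly one vertex from each such component, and a short leaf-stripping induction shows the nonzero value is always $\pm 1$.

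Regrouping the resulting double sum by the underlying forest $T \in \Omega_i$ and counting admissible vertex subsets $S$ for each $T$ then collects $q_i$ into a weighted sum over $\Omega_i$ of the claimed form. The main obstacle is the final combinatorial bookkeeping: carrying out the leaf-stripping $\pm 1$ argument cleanly, verifying that the signs produced by Cauchy-Binet wash out upon squaring, and reconciling the count of admissible transversals $S$ attached to each forest $F$ with the weight $|V_F|$ given in the theorem statement.
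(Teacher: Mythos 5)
The paper gives no proof of this theorem (it is quoted from Biggs), so your argument can only be judged on its own terms. Its linear-algebraic skeleton is the standard and correct route: extracting $q_i$ as a sum of $i\times i$ principal minors, applying Cauchy--Binet to $\Delta_S=\partial_S\partial_S^T$, and characterizing the nonsingular square submatrices $\partial_{S,T}$ (namely $T$ a forest and $S$ a transversal omitting exactly one vertex from each component of $(V_\Gamma,T)$, with determinant $\pm1$ by leaf-stripping) are all sound, and the sign worries are harmless since each surviving term contributes $(\pm 1)^2=1$.

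The step that fails is exactly the one you defer to the end, the ``reconciliation'' of the transversal count with the weight $|V_F|$. For a fixed forest $T$ whose non-singleton components are $T_1,\dots,T_c$, the admissible sets $S$ are obtained by \emph{independently} deleting one vertex from each $T_j$ (and deleting every isolated vertex), so their number is the product $\prod_{j=1}^{c}|V(T_j)|$, not the sum $\sum_j |V(T_j)|=|V_F|$; these agree only when $F$ has at most one non-singleton component. Your method therefore proves
$$q_i \;=\; \sum_{F\in\Omega_i}\ \prod_{j}\,\bigl|V(F_j)\bigr|,$$
the product taken over the components $F_j$ of $F$ --- which is in fact the correct Biggs/Kelmans--Chelnokov formula; the weight $|V_F|$ in the statement as transcribed is a misquotation. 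Concretely, for the path $P_5$ with vertices $1,\dots,5$ and $i=3$, the forest with edges $\{1,2\},\{2,3\},\{4,5\}$ contributes $3\cdot 2=6$ rather than $5$; summing over the four $3$-edge forests gives $4+6+6+4=20$, which matches the sum of the $3\times 3$ principal minors of the Laplacian of $P_5$ (equivalently $e_3$ of its eigenvalues), whereas the $|V_F|$ weighting gives $18$. So the obstacle you flagged is not bookkeeping: no amount of regrouping will bring the count down to $|V_F|$, and you should prove the product version instead. (A minor separate point: with the convention $\det(\Delta-\lambda I)$ the leading coefficient is $(-1)^n$, so to get nonnegative $q_i$ in $\sum_i(-1)^iq_i\lambda^{n-i}$ one should expand $\det(\lambda I-\Delta)$.)
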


The above theorem also implies \textit{Kirchoff's Theorem}, i.e.\ the coefficient of $\lambda^1$ in $\det(\Delta- \lambda I)$ is subsequently
$$\lambda_1 \cdots \lambda_{n} = |V_\Gamma| \ \tau(\Gamma),$$
where the $\lambda_i$ above are the non-zero eigenvalues of the graph Laplacian and $\tau(\Gamma)$ is the number of spanning trees of $\Gamma$. The following theorem from \cite{higherdimforests} generalizes Theorem \ref{biggsthm} to higher dimensions.

\begin{theorem}[Bernardi-Klivans 2016] \label{higherlap}
For a $d$-dimensional simplicial complex $K$, the characteristic polynomial of the Laplacian matrix $\Delta_d$ gives a generating function for the rooted forests of $K$. More precisely,
$$\sum_{\text{$(F,R)$ rooted forest of $K$}} |H_{d-1}(F,R)|^2 \lambda^{|R|} = \det(\Delta_d + \lambda I),$$
where $I$ is the identity matrix of dimension $|K_{d-1}|$.
\end{theorem}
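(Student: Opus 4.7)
The plan is to expand $\det(\Delta_d+\lambda I)$ via the standard characteristic polynomial identity, reduce each principal minor to a sum of squares by Cauchy--Binet, and then identify the surviving terms with rooted forests of $K$. Setting $n=|K_{d-1}|$, one has
$$\det(\Delta_d+\lambda I)=\sum_{k=0}^{n}\lambda^{\,n-k}\sum_{\substack{S\subseteq K_{d-1}\\ |S|=k}}\det(\Delta_d[S,S]),$$
where $\Delta_d[S,S]$ denotes the principal submatrix on $S$. Applying Cauchy--Binet to $\Delta_d=\partial_d\partial_d^T$ then yields
$$\det(\Delta_d+\lambda I)=\sum_{k=0}^{n}\lambda^{\,n-k}\sum_{\substack{S\subseteq K_{d-1}\\ |S|=k}}\sum_{\substack{T\subseteq K_d\\ |T|=k}}\det\!\big((\partial_d)_{S,T}\big)^{2},$$
where $(\partial_d)_{S,T}$ is the square submatrix of $\partial_d$ with rows indexed by $S$ and columns by $T$.

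The next step is to determine which pairs $(S,T)$ contribute. The $k\times k$ matrix $(\partial_d)_{S,T}$ has nonzero determinant if and only if (i) the columns indexed by $T$ are linearly independent, so that $F:=T$ is a forest of $K$, and (ii) the rows of $\partial_d^F$ indexed by $S$ form a basis of the row space of $\partial_d^F$. Setting $R:=K_{d-1}\setminus S$, condition (ii) is precisely the statement that $R$ is a root of $F$: the maximal-rank condition on rows outside $R$ is relative-freeness, and their independence is relative-generation. Hence the nonzero terms are in bijection with the rooted forests $(F,R)$ of $K$, and the exponent $n-|S|=|R|$ produces the correct power of $\lambda$.

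The main obstacle is identifying $\det\!\big((\partial_d)_{S,T}\big)^{2}$ with $|H_{d-1}(F,R)|^{2}$. The plan is to pass to the relative chain complex of the pair $(F,R)$: under the natural identifications $C_d(F,R)\cong\ZZ^{F}$ and $C_{d-1}(F,R)\cong\ZZ^{\bar R}$ (with $\bar R=K_{d-1}\setminus R$), the relative boundary $\partial_d^{*}$ is exactly the square integer matrix $(\partial_d)_{S,T}$. A Smith normal form argument then gives $|\mathrm{coker}(\partial_d^{*})|=|\det((\partial_d)_{S,T})|$. The subtle point is to show that this cokernel coincides with $H_{d-1}(F,R)$ as defined in the theorem; the strategy is to exploit the long exact sequence of the pair together with the facts that $F$ being a forest forces $H_d(F)=0$ and $\partial_d^{F}$ to be injective, while the root conditions on $R$ control the interaction of $\partial_{d-1}^{*}$ with the relative complex. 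Once this identification is in place, squaring the determinant (which is already an integer) and summing over rooted forests gives the claimed generating function identity.
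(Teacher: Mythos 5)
The paper does not prove this statement at all: Theorem \ref{higherlap} is imported verbatim from the cited Bernardi--Klivans reference, so there is no in-paper argument to compare yours against. That said, your outline is the standard matrix-forest-theorem argument and is essentially the proof given in that reference. The two algebraic steps are correct: the principal-minor expansion of $\det(\Delta_d+\lambda I)$, the Cauchy--Binet reduction of $\det(\Delta_d[S,S])$ to $\sum_T \det\big((\partial_d)_{S,T}\big)^2$, and the identification of the nonvanishing pairs $(S,T)$ with rooted forests $(F,R)=(T,\,K_{d-1}\setminus S)$ all check out; in particular, reading ``the rows of $\partial_d^F$ indexed by $S$ form a basis of the row space'' as exactly the relatively-free plus relatively-generating conditions matches the paper's definitions, and the exponent bookkeeping $n-|S|=|R|$ is right.

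The one genuine gap is the step you yourself flag: identifying $\mathrm{coker}\big((\partial_d)_{S,T}\big)$ with $H_{d-1}(F,R)$. First, your claim $C_{d-1}(F,R)\cong\ZZ^{\bar R}$ with $\bar R=K_{d-1}\setminus R$ already uses the root condition (relative generation forces every $(d-1)$-face outside $R$ to be a face of some cell of $F$, since a zero row of $\partial_d^F$ cannot lie in an independent set of rows); this should be said explicitly. More seriously, $H_{d-1}(F,R)=\ker\partial_{d-1}^*/\im\partial_d^*$ equals the cokernel of $\partial_d^*$ only if $\partial_{d-1}^*$ vanishes on $C_{d-1}(F,R)$, i.e.\ only if every $(d-2)$-face of a cell of $\bar R$ already lies in the subcomplex one quotients by. Under the naive reading of $F$ and $R$ as downward closures of the given face sets this can fail; it holds because Bernardi and Klivans take the pair to contain the full $(d-2)$-skeleton on both sides, equivalently they work with the two-term relative complex $\ZZ^{F}\to\ZZ^{\bar R}$. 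Once that convention is fixed, the long exact sequence you propose is not even needed: $C_{d-2}(F,R)=0$ gives $H_{d-1}(F,R)=\mathrm{coker}(\partial_d^*)$ and $H_d(F,R)=\ker(\partial_d^*)$ immediately, and Smith normal form finishes the identification. So the architecture is sound, but as written the final identification rests on an unstated convention for the pair $(F,R)$ rather than on the sketched homological argument.
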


\subsection{Discrete Morse theory} 
Lastly, we overview the foundations of discrete Morse theory -- with a particular emphasis placed on concepts relevant to discrete gradient vector fields.

\begin{definition} \label{dmtfn}
Let $K$ be a simplicial complex. A \textit{discrete Morse function} on $K$ is a function $f:K \to \RR$ satisfying the following two inequalities for all $\sigma \in K_p$:
$$|\{ \tau \in K_{p-1} \ | \ \tau<\sigma \text{ and } f(\sigma)\leq f(\tau) \}| \leq 1;$$
$$|\{ \tau \in K_{p+1} \ | \ \sigma < \tau \text{ and } f(\tau)\leq f(\sigma) \}| \leq 1.$$
\end{definition}

That is, for any given cell, there is at most one coface with a smaller or equal value and at most one face with a larger or equal value. It is possible that both of the above sets have zero cardinality for some $\sigma \in K_p$, in which case we say the cell $\sigma$ is a \textit{critical cell} of \textit{index $p$}. In other words, we say $f$ is critical at a cell $\sigma$ if and only if, locally at $\sigma$, the function $f$ is increasing with the dimension of the cells. We denote by $c_i(f)$ the number of critical cells of index $i$ for a discrete Morse function $f$. 

\begin{theorem}[Forman 2002] \label{morseineq}
Let $f$ be a discrete Morse function on a $n$-dimensional simplicial complex $K$. We then have that\footnote{Yes, the second statement is indeed called an ``inequality" in the literature.}
\begin{enumerate}
    \item  \textit{First Weak Morse Inequality:} $\beta_i(K) \leq c_i(f)$ for all $i$, and
    \item  \textit{Second Weak Morse Inequality:} \[\sum_{i=0}^n (-1)^i c_i(f) = \chi(K).\]
\end{enumerate}
where $\chi(K)$ denotes the Euler characteristic of $K$, i.e. $\sum_{i=0}^n (-1)^i|K_i|$.
\end{theorem}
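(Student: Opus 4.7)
The plan is to show that $K$ is homotopy equivalent to a CW complex $\mathcal{M}_f$ having exactly $c_i(f)$ cells of dimension $i$ for each $i$, from which both inequalities follow via standard algebraic topology. First, I would associate to $f$ its discrete gradient pairing
\[
V_f = \{(\sigma,\tau) \in K \times K \ | \ \sigma < \tau, \ f(\tau) \leq f(\sigma)\}.
\]
The defining inequalities in Definition \ref{dmtfn} guarantee that every cell of $K$ appears in at most one pair, so $V_f$ is a partial matching on the Hasse diagram of $K$, and under this dictionary the critical cells are precisely the unmatched ones.

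Next, I would filter $K$ by its sublevel complexes $K_c := f^{-1}(-\infty, c]$ and track the homotopy type of $K_c$ as $c$ increases. At each value $c$ where $K_c$ changes, a case analysis yields a dichotomy: either (i) the cells newly added at level $c$ form a matched pair $(\sigma,\tau) \in V_f$, in which case $\tau$ admits $\sigma$ as a free face and the inclusion $K_{c-\eps} \hookrightarrow K_c$ is reversed by an elementary collapse, or (ii) a single critical cell $\sigma$ is added, attached along its boundary. Iterating through the finitely many values attained by $f$, we build $K$ (up to homotopy) by successively attaching exactly one cell of dimension $i$ per critical cell of index $i$, producing the desired CW complex $\mathcal{M}_f$.

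With $\mathcal{M}_f \simeq K$ in hand, both inequalities follow immediately. For part (1), the cellular chain complex of $\mathcal{M}_f$ over $\QQ$ has $\dim_\QQ C_i(\mathcal{M}_f;\QQ) = c_i(f)$, and since $\beta_i(K) = \dim_\QQ H_i(\mathcal{M}_f;\QQ)$ is the dimension of a subquotient of $C_i(\mathcal{M}_f;\QQ)$, we conclude $\beta_i(K) \leq c_i(f)$. For part (2), the Euler characteristic is a homotopy invariant which for any CW complex equals the alternating sum of cell counts in each dimension, hence $\chi(K) = \chi(\mathcal{M}_f) = \sum_{i=0}^n (-1)^i c_i(f)$.

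The main obstacle is the filtration argument in the middle step. One has to verify the dichotomy rigorously at every stage: namely, that when a matched pair $(\sigma,\tau)$ appears both cells enter the filtration at a shared level and satisfy the free-face condition needed for the collapse, and that no interference from other cells taking the same value of $f$ obstructs the retraction. This requires careful combinatorial bookkeeping (and in the discrete setting, an argument that $f$ can be perturbed to be injective on cells of each dimension without changing the critical cells or the gradient $V_f$). Once this filtration analysis is in place, the first and third steps are essentially formal consequences of CW homology.
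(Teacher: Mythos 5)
The paper does not prove this theorem; it is quoted as background and attributed to Forman, so there is no internal argument to compare against. Your proposal reconstructs Forman's original proof strategy (collapse along matched pairs, attach a cell per critical cell, then apply the classical weak Morse inequalities for CW complexes), and in outline it is correct. Two comments. First, the one genuinely delicate point is your definition of the filtration: $f^{-1}(-\infty,c]$ is in general \emph{not} a subcomplex, precisely because a matched pair $(\sigma,\tau)\in V_f$ has $f(\tau)\leq f(\sigma)$, so the coface $\tau$ can enter the sublevel set before its face $\sigma$ does. The correct object is Forman's \emph{level subcomplex} $K(c)=\bigcup_{f(\sigma)\leq c}\bigcup_{\nu\leq\sigma}\nu$, i.e.\ the smallest subcomplex containing all cells of value at most $c$; with that replacement your dichotomy (elementary collapse across a matched pair versus attachment of a single critical cell) is exactly Forman's Lemmas on level subcomplexes, and the rest of your argument goes through, including the reduction of integral Betti numbers to $\QQ$-dimensions via universal coefficients. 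Second, note that part (2) admits a much cheaper, purely combinatorial proof that needs no homotopy theory: if $a_i$ denotes the number of pairs $(\sigma,\tau)\in V_f$ with $\dim\sigma=i$, then $|K_i|=c_i(f)+a_i+a_{i-1}$, and the alternating sum telescopes to give $\chi(K)=\sum_i(-1)^ic_i(f)$ directly. Only part (1) genuinely requires the CW (or algebraic Morse complex) machinery you set up.
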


Note that there always exists a discrete Morse function on a complex, e.g.\ the trivial function $f:K \to \RR$ defined by
$$f(\sigma) = \dim \sigma,$$
where $\dim \sigma$ is the dimension of the cell $\sigma$, is a discrete Morse function on $X$. Under this function, every cell is critical. The constant function is not a Morse function unless one is merely considering disjoint unions of $0$-cells.

Historically, discrete Morse functions were studied on regular CW complexes, though today research focuses on the case of simplicial complexes. This is largely due to their equivalence: a regular CW complex is \textit{isomorphic} to a simplicial complex in a natural definition of the category of combinatorial cell complexes. For an explicit discussion of this, we direct the reader to \cite{cwsimplicial}.

We can now build towards discrete gradient vector fields. The following lemma will make sure the definition we provide will be well-defined.

\begin{lemma} \label{exclusion} \label{unique}
Suppose $\sigma$ is a $p$-cell that is not critical for a discrete Morse function $f$. Then exactly one of the following conditions holds:
\begin{enumerate}
    \item There is a $(p+1)$-cell $\tau > \sigma$ with $f(\tau)\leq f(\sigma)$;
    \item There is a $(p-1)$-cell $\alpha < \sigma$ with $f(\alpha)\geq f(\sigma)$.
\end{enumerate}
\end{lemma}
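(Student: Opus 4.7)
The plan is to observe that, by definition, a cell is critical precisely when both of the sets counted in Definition~\ref{dmtfn} are empty, so if $\sigma$ is not critical at least one of (1) or (2) must hold. The whole content of the lemma is therefore the \emph{exclusivity}: conditions (1) and (2) cannot hold simultaneously. I will prove this by contradiction, assuming the existence of both an offending coface $\tau$ and an offending face $\alpha$, and then exploiting the "diamond" structure of the simplicial face poset to force an impossible chain of inequalities on $f$.

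First I would fix the set-up: assume for contradiction that there exist $\alpha < \sigma < \tau$ with $\dim \alpha = p-1$, $\dim \tau = p+1$, and
\[
f(\alpha) \;\geq\; f(\sigma) \;\geq\; f(\tau).
\]
The key combinatorial input I would now invoke is the standard fact about the face poset of a simplicial complex: whenever $\alpha < \tau$ with $\dim \tau - \dim \alpha = 2$, there are \emph{exactly} two $p$-cells $\sigma$ and $\sigma'$ with $\alpha < \sigma,\sigma' < \tau$ (the "diamond" property of a simplex). Let $\sigma'$ denote the other such $p$-cell.

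Next I would pinch $f(\sigma')$ between two incompatible bounds by applying Definition~\ref{dmtfn} at the two cells $\tau$ and $\alpha$. Applying the condition at the $(p+1)$-cell $\tau$, the set of $p$-faces $\rho < \tau$ with $f(\rho) \geq f(\tau)$ has cardinality at most $1$; since $\sigma$ already lies in this set, the other $p$-face $\sigma'$ must satisfy $f(\sigma') < f(\tau)$. Applying the condition at the $(p-1)$-cell $\alpha$, the set of $p$-cofaces $\rho > \alpha$ with $f(\rho) \leq f(\alpha)$ has cardinality at most $1$; since $\sigma$ already lies in this set, $\sigma'$ must satisfy $f(\sigma') > f(\alpha)$. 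Combining these with the assumed chain of inequalities yields
\[
f(\sigma') \;<\; f(\tau) \;\leq\; f(\sigma) \;\leq\; f(\alpha) \;<\; f(\sigma'),
\]
the desired contradiction.

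The main obstacle is really just having the diamond property available: once one recognizes that a codimension-$2$ pair in a simplicial complex admits exactly two intermediate $p$-cells, the rest is a clean squeeze argument. The only subtlety worth flagging is that this step uses the simplicial (or regular CW) hypothesis in an essential way — on a general CW complex the interval $[\alpha,\tau]$ could contain more than two $p$-cells and the same squeeze would not produce a contradiction, which is why discrete Morse theory is classically developed in the regular CW setting.
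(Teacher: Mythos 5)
Your proof is correct and follows essentially the same route as the paper's: assume both conditions hold, take the unique second $p$-cell $\sigma'$ in the diamond between $\alpha$ and $\tau$, and squeeze $f(\sigma')$ to get the contradictory chain $f(\tau)\leq f(\sigma)\leq f(\alpha)<f(\sigma')<f(\tau)$. Your explicit remarks on the trivial ``at least one'' half and on where regularity of the complex is used are accurate but not points of divergence.
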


\begin{proof}
Assume for contradiction both conditions hold for $\sigma$. We then have that $\tau$ has another $(p-1)$-cell, say $\sigma'$, where $\alpha<\sigma'<\tau$. Since $\alpha<\sigma$ and $f(\alpha) \geq f(\sigma)$, we have that $f(\alpha) < f(\sigma')$. Similarly, $f(\sigma') < f(\tau)$. Hence we get that 
$$f(\tau) \leq f(\sigma) \leq f(\alpha) < f(\sigma') < f(\tau),$$
which is a contradiction.
\end{proof}

\

\begin{center}
    \captionsetup{type=figure}
    \includegraphics[width=.35\linewidth]{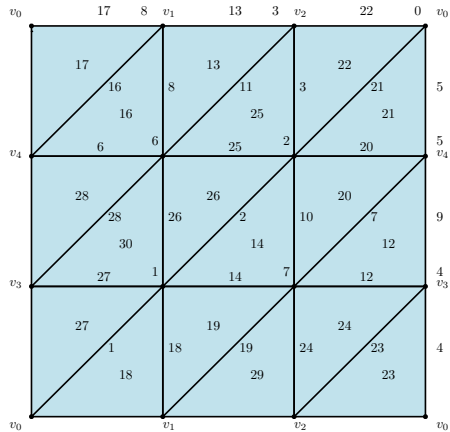} \label{torus}
    \captionof{figure}{An example of a discrete Morse function on a triangulation of a torus.} 
\end{center}

\

\begin{definition} \label{gendef}
A \textit{discrete vector field} $V$ on a simplicial complex $K$ is a collection of pairs of faces where each pair $(\sigma,\tau) \in V$ satisfies:
\begin{enumerate}
    \item $\dim \tau = \dim \sigma +1$,
    \item $\sigma < \tau$, and
    \item every face of the complex is in at most one pair.
\end{enumerate}
If $f$ is a discrete Morse function on $K$, it induces the discrete vector field
$$V_f := \{(\sigma,\tau) \in K \times K \ | \ \dim \tau = \dim \sigma + 1 \text{ and } f(\tau) \leq f(\sigma)\}.$$
We say that a discrete vector field $V$ is a discrete \textit{gradient} vector field if it is induced by a discrete Morse function on $K$. If $(\sigma,\tau) \in V$, we call $(\sigma,\tau)$ an \textit{arrow}, \textit{matching}, or \textit{vector}, with $\tau$ the \textit{head} and $\sigma$ the \textit{tail}.
\end{definition}

Observe that fitting orientations are a special case of discrete vector fields, where pairings occur only in the two top-most dimensions. In general, fitting orientations do not coincide with discrete gradient vector fields, as the latter is in-fact characterized by not having \textit{closed paths of faces}.

\

\begin{center}
    \captionsetup{type=figure}
    \includegraphics[width=.8\linewidth]{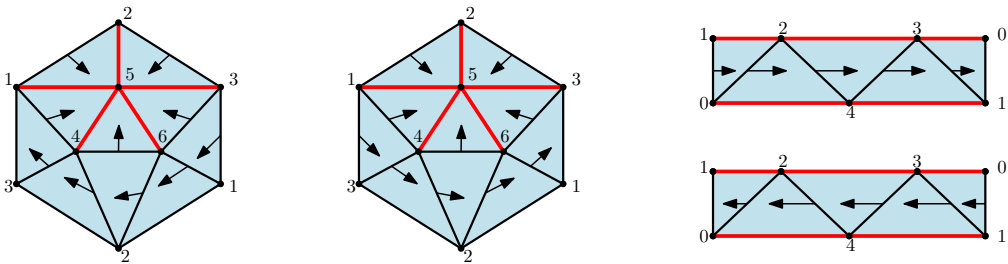} 
    \captionof{figure}{Examples of fitting orientations having closed cycles on rooted forests of  triangulations of the real projective plane and the M\"{o}bius strip.} 
    \label{proj_mobius}
\end{center}

\

We conclude with presenting an important equivalent way of viewing discrete gradient vector fields. Recall that given a simplicial complex $K$, we may associate to it a \textit{Hasse diagram}\footnote{We sometimes abuse notation and treat the Hasse diagram $\mathcal{H}_K$ alternately as both a poset and a graph. It will be clear from context.}, which we denoted by $\mathcal{H}_K$, defined as follows: if $\sigma \in K$, we write $\sigma \in \mathcal{H}_K$ for the corresponding node. There is an edge between two vertices $\sigma,\tau \in \mathcal{H}_K$ if and only if $\tau$ is a face of co-dimension-$1$ of $\sigma$. It is standard to draw Hasse diagrams with levels increasing from the base (see Figure $5$).

Recall that a \textit{matching} on a graph $\Gamma$ is a subset $M \subseteq E_\Gamma$ of the edges such that no two edges in $M$ have common vertices. Importantly observe that if $V$ a discrete vector field on $K$, then $M_V:=V$ is a(n edge) matching on $\mathcal{H}_K$. If $V_f$ is a discrete gradient vector field, then $M_f := M_{V_f}$ is  called an  \textit{acyclic} matching on $\mathcal{H}_K$. In this language, fitting orientations on a rooted forest $(F,R)$ may be seen as matchings on the subposet $\mathcal{H}_{(F,R)}$ of $\mathcal{H}_K$ induced by $F \cup \bar{R}$.

\begin{remark}
Acyclic matchings on $\mathcal{H}_K$ are also sometimes called \textit{Morse matchings} and the computational complexiy of finding them is something of a great deal of interest, as it allows us to find a smaller complex which is homotopy equivalent to the original complex. For more on this, \cite{complexity} is a great place to start.
\end{remark}

A matching $M_V$ on $\mathcal{H}_K$ from a discrete vector field $V$ are commonly illustrated by orienting edges on $\mathcal{H}_K$: if $e \in M_V$, it is oriented upwards, otherwise it is oriented downwards. The matching is acyclic if and only if the corresponding oriented Hasse diagram has no directed cycles\footnote{In fact, more is known: if an oriented Hasse diagram arising from a matching contains a cycle, it is \textit{necessarily} contained within two levels. See \S2.2 of \cite{scoville} for details.}.


\

\begin{center}
    \captionsetup{type=figure}
    \includegraphics[width=.5\linewidth]{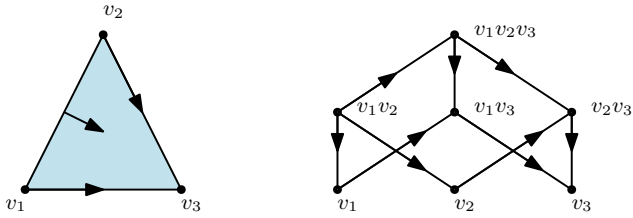} \label{hasse_ex}
    \captionof{figure}{An example of a discrete gradient vector field $V$ on the $2$-simplex $K = \Delta^2$ and the associated orientation on the Hasse diagram $\mathcal{H}_K$.} 
\end{center}

\

We conclude by mentioning the specific sets we aim to enumerate.

\begin{definition}
For a simplicial complex $K$, let $\mathcal{M}(K)$ denote the set of equivalence classes of discrete Morse functions on $K$, where $f \sim g$ if and only if $V_f = V_g$. Furthermore, for any $\ell \in \ZZ_{\geq 0}$, define the subset $\mathcal{M}_\ell(K) \subseteq \mathcal{M}(K)$ by
$$\MM_\ell(K) := \{ f \in \mathcal{M}(K) \ | \ c_d(f) = \ell \text{ and } c_n(f) = |K_n| \text{ for } n < d-1 \}.$$
\end{definition}

 Observe it follows from the Second Weak Morse Inequality that as we have $c_d(f) = \ell$ and $c_n(f) = |K_n| \text{ for } n < d-1$, then from canceling terms, we obtain
$$c_d(f) - c_{d-1}(f) = |K_d| - |K_{d-1}|,$$
so as $c_d(f) = \ell$ for any $f \in \MM_\ell(K)$, it necessarily follows that $c_{d-1}(f) = \ell - |K_d| + |K_{d-1}|$ for all discrete Morse functions $f \in \MM_\ell(K)$. 

\

An equivalent formulation of this is having $\mathcal{M}(K)$ denote the set of \textit{acyclic} matchings on $\mathcal{H}_K$ and $\mathcal{M}_\ell(K)$ denote the set of \textit{acyclic} matchings on the subposets $\mathcal{H}_{(F,R)}$ such that 
$$|F|=|K_d|-\ell \qquad \text{and} \qquad |\bar{R}| = \ell - |K_d| + |K_{d-1}|.$$

\section{Gradients of \texorpdfstring{$1$} --dimensional Simplicial Complexes}

In this section, we will discuss our results towards enumerating all the discrete gradient vector fields that may be imposed on a $1$-dimensional complex, i.e.\ a graph. We approach this by finding a \textit{generating polynomial}, and furthermore, we show that this generating polynomial is equal to the characteristic polynomial of the graph Laplacian.

Throughout, assume $\Gamma$ is a finite, simple, and not neccesarily connected graph and $f$ is a discrete Morse function on $\Gamma$. We commonly denote a subgraph $\Gamma'$ of $\Gamma$ with the notation $\Gamma' \subseteq \Gamma$, and write $f|_{\Gamma'}$ for the discrete Morse function induced on the subgraph by $f$. All graphs are considered labeled, so their automorphism group is always trivial.



We will identify the discrete gradient vector field $V_f$ with a directed graph $\Gamma_f$ in the following natural way:\ the underlying undirected graph of $\Gamma_f$ is $\Gamma$ and an edge $(v,w)$ is directed in $\Gamma_f$ if and only if $(\{v\},\{v,w\}) \in V_f$, otherwise it is considered undirected. This provides a helpful characterization of critical cells in the $1$-dimensional case:
\begin{itemize}
    \item an edge is a critical cell of $f$ on $\Gamma$ if and only if it is undirected in $\Gamma_f$,
    \item a vertex is a critical cell of $f$ on $\Gamma$ if and only if it is a sink in $\Gamma_f$.
\end{itemize}

\subsection{Characterizing gradients}

We now can begin descending towards the heart of the enumeration problem. The counting of gradients on a graph becomes clear once we begin removing critical edges, which is precisely what the next definition gives us the ability to focus more closely on.

\begin{definition}
Let $f$ be a discrete Morse function on the graph $\Gamma$ and $X \subseteq E_\Gamma$ be the set of critical edges under $f$. Then the graph $R(\Gamma,f)$, which we call the \textit{remainder}, is the edge-subgraph of $E_{\Gamma_f} \setminus X$ in the directed graph $\Gamma_f$.
\end{definition}

We can now construct a natural bijection between discrete gradient vector fields on graphs and rooted forests. Note that this was obtained through different means in \cite{chari}, relying instead on the acyclicity property of gradients.

\begin{lemma}
For a graph $\Gamma$, we have a bijection
$$\mathcal{M}(\Gamma) \cong \{ F \text{ rooted forest} \ | \ F^* \subseteq \Gamma \},$$
where $F^*$ denotes the underlying undirected graph of $F$, given by the map $f \mapsto R(\Gamma,f)$.
\end{lemma}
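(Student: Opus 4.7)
The plan is to verify that the assignment $f \mapsto R(\Gamma, f)$ descends to a well-defined map on equivalence classes whose image consists precisely of directed rooted forests inside $\Gamma$, and then to exhibit an explicit inverse. Since the equivalence relation on $\MM(\Gamma)$ identifies $f \sim g$ whenever $V_f = V_g$, and since $R(\Gamma, f)$ is built solely from $V_f$ and the critical-edge set of $f$ (which is itself determined by $V_f$), the map descends to $\MM(\Gamma)$ without issue.

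To show that the image $R(\Gamma,f)$ really is a rooted forest with fitting orientation, I would argue as follows. By axiom (3) of Definition \ref{gendef}, each vertex lies in at most one pair of $V_f$, so each vertex has out-degree at most $1$ in $R(\Gamma, f)$. Since $V_f$ is a discrete gradient vector field, the characterization noted after Definition \ref{gendef} forces $R(\Gamma, f)$ to have no directed cycles, as gradient paths in the $1$-dimensional case are exactly directed walks in $\Gamma_f$. For a connected component on $k$ vertices, the sum of out-degrees equals its number of directed edges and is at most $k$; if equality held, every vertex would have out-degree $1$ and the finiteness of the vertex set would force a directed cycle. Hence there are at most $k-1$ edges, the component is a tree, and exactly one vertex is a sink. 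These sinks are the roots, and the remaining arrows realize the (unique) fitting orientation pointing toward the root of each component.

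For the inverse, I would read off $V_f$ directly from $R(\Gamma, f)$: the arrows of $V_f$ are precisely the directed edges of $R(\Gamma, f)$, and the critical edges of $f$ are $E_\Gamma$ minus the edge set of $R(\Gamma, f)$; this yields injectivity. For surjectivity, given a rooted forest $(F, R)$ with $F^* \subseteq \Gamma$ and its (unique) fitting orientation $\psi : \overline{R} \to F$, I would define $V_F := \{(v, \psi(v)) : v \in \overline{R}\}$. The three conditions of Definition \ref{gendef} follow immediately from $\psi$ being a bijection matching each non-root vertex with an incident edge, and acyclicity of $V_F$ follows from $F$ being a forest: any non-trivial closed path of faces would project to a directed cycle in $F$, which is impossible. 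Invoking Forman's characterization of gradient vector fields via acyclicity then gives $V_F = V_g$ for some discrete Morse function $g$, and $R(\Gamma, g) = F$ by construction.

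The main subtlety — and the one step I expect to require the most care — is the convention for isolated vertices: the rooted forests of Theorem \ref{higherlap} include the entire vertex set $V_\Gamma$ (with critical vertices incident only to critical edges appearing as singleton tree components that are their own root), whereas the edge-subgraph definition literally omits any vertex lacking a non-critical incident edge. Aligning these two conventions, by augmenting $R(\Gamma,f)$ with such vertices as singleton-root components, is the bookkeeping needed to make the bijection exact; once this is done, the two-sided inverse is forced.
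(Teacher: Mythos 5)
Your proof is correct, but it follows a genuinely different route from the paper's. For the forward direction the paper applies the weak Morse inequalities to restrictions of $f$: restricting to any cycle $Z$ forces $c_1(f|_Z)\geq\beta_1(Z)=1$, so every cycle contains a critical edge and the remainder is a forest, and restricting to a component $T$ of the remainder forces $c_0(f|_T)=\chi(T)=1$, producing the unique sink; you instead run a purely combinatorial out-degree count (each vertex has out-degree at most one by the matching axiom, and out-degree exactly one everywhere would force a directed cycle, contradicting acyclicity of $V_f$). Both are valid; the paper's version has the advantage of foreshadowing the Morse-inequality argument reused in the higher-dimensional Theorem \ref{rootedfor}. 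For surjectivity the paper constructs an explicit discrete Morse function on $\widetilde{T}$ by setting $g(\text{root})=0$, incrementing outward along the tree, and assigning each critical edge the maximum of its endpoint values plus one; you instead build the discrete vector field $V_F$ from the fitting orientation and invoke Forman's theorem that an acyclic discrete vector field is the gradient of some Morse function. That theorem is standard (it appears in \cite{scoville}) but is not actually stated in the body of this paper, so the paper's explicit construction is the more self-contained option, whereas yours is shorter and makes injectivity explicit (the paper only exhibits the inverse map). Your closing remark about isolated vertices is a genuine point of care: critical vertices incident only to critical edges do not appear in the edge-subgraph $R(\Gamma,f)$, and the paper's proof passes over this silently; your observation that such vertices are automatically critical, so the remainder still determines $V_f$ uniquely, is exactly the bookkeeping needed.
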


\begin{proof}
Consider some $f \in \MM(\Gamma)$. Let $X \subseteq E_\Gamma$ denote the set of critical edges on $\Gamma$ under $f$. If $Z \subseteq \Gamma$ is a cycle, then by the second weak Morse inequality,
$$c_1(f|_Z) \geq 1 = \beta_1(Z),$$
hence the edge-subgraph of $E_\Gamma\setminus X$ in $\Gamma$, say $\Lambda$, must be a forest. To see that $\Lambda$ has a distinguished root for each component, let $T$ be a connected component. Then by the first weak Morse inequality,
$$c_0(f|_T)+c_1(f|_T)=1+0=\chi(T)=\beta_0(T),$$
where the second equality is due to the construction of $\Lambda$. Thus it follows that each connected component has a sink, therefore $\Lambda$ corresponds to some $R(\Gamma,f)$.

We now construct the inverse map. Without loss of generality\footnote{We repeat the same process on each component for forests having more than one connected component.}, assume $F=T$ is tree rooted at a vertex $v$. Denote by $\widetilde{T}$ the graph resulting from adding the edges $X := E_\Gamma \setminus E_T$ to $T$ and associated additional vertices. We define a discrete Morse function $g$ on on $\widetilde{T}$ as follows: define $g(v) = 0$, and increment the assignment to cells by $1$ while traversing outwards along edges from the root (``against the grain"). For any edges $e \in X$, if $v,w$ are the endpoints of $e$, define $g(e)=\max\{g(v),g(w)\}+1$. The resulting assignment defines a discrete Morse function on $\widetilde{T}$ that trivially satisfies $R(\widetilde{T},g)=T$. Since we clearly have $\widetilde{T}=\Gamma$, the proof is complete.
\end{proof}


 In particular, this above map can be refined to a collection of the following maps.

\begin{corollary} \label{rooted1dim}
For a graph $\Gamma$, we have a bijections
$$\mathcal{M}_k(\Gamma) \cong \{ F \text{ rooted forest} \ | \ F^* \subseteq \Gamma \text{ and } |E_F| = |E_\Gamma|-k \},$$
for each $0 \leq k \leq |E|$, given by the map $f \mapsto R(\Gamma,f)$.
\end{corollary}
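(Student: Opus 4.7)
The plan is to exploit the bijection from the preceding lemma; the corollary is a refinement that tracks a single numerical statistic, so essentially all the content is already in hand, and the task is to verify that one refinement matches the other on the nose.

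First I would unpack the definition of $\MM_k(\Gamma)$ in the $1$-dimensional setting. Since $d = 1$, the requirement that $c_n(f) = |K_n|$ for $n < d - 1 = 0$ is vacuous (no such $n$ exists). So in the graph case, $\MM_k(\Gamma)$ reduces to simply $\{f \in \MM(\Gamma) : c_1(f) = k\}$, i.e., discrete Morse functions with exactly $k$ critical edges.

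Next I would compute the effect of $f \mapsto R(\Gamma,f)$ on the edge count. By construction, $R(\Gamma,f)$ is the edge-subgraph of $E_{\Gamma_f} \setminus X$ where $X$ is the set of critical edges under $f$. Hence
$$|E_{R(\Gamma,f)}| = |E_\Gamma| - |X| = |E_\Gamma| - c_1(f).$$
In particular, $f \in \MM_k(\Gamma)$ if and only if $|E_{R(\Gamma,f)}| = |E_\Gamma| - k$.

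Finally, since the preceding lemma shows that $f \mapsto R(\Gamma,f)$ is a bijection from $\MM(\Gamma)$ to rooted forests $F$ with $F^* \subseteq \Gamma$, restricting this bijection to the fiber where $|E_F| = |E_\Gamma| - k$ on the target gives exactly $\MM_k(\Gamma)$ on the source, which is what we wanted. There is no real obstacle here: the main content lies in the lemma, and this corollary is a bookkeeping statement tracking the number of critical edges across the bijection.
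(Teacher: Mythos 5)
Your proposal is correct and matches the paper's approach: the paper gives no separate proof, treating the corollary as an immediate restriction of the bijection in the preceding lemma to the fibers determined by the number of critical edges, which is exactly the bookkeeping you carry out (including the correct observation that the condition $c_n(f)=|K_n|$ for $n<d-1$ is vacuous when $d=1$).
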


It is worth noting that by the first weak Morse inequality, we necessarily have that $\MM_k(\Gamma)$ can be non-trivial only when $\beta_1(\Gamma) \leq k \leq |E|$. We highlight the interesting minimal case.

\begin{corollary} \label{eigen}
We have that for a graph $\Gamma$ on $n$ vertices that
\begin{equation*} \label{eigeneqn} \tag{*}
|\MM_{\beta_1}(\Gamma)| = \lambda_1 \cdots \lambda_{n},
\end{equation*}
where the $\lambda_i$ are the non-zero eigenvalues of the graph Laplacian of $\Gamma$.
\end{corollary}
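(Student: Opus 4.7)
The plan is to chain Corollary \ref{rooted1dim} with Kirchhoff's theorem, where rooted spanning trees of $\Gamma$ serve as the bridging object.

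First I would apply Corollary \ref{rooted1dim} with $k = \beta_1(\Gamma)$ to identify $\MM_{\beta_1}(\Gamma)$ with the set of rooted forests $F$ satisfying $F^* \subseteq \Gamma$ and $|E_F| = |E_\Gamma| - \beta_1$. Because $\Gamma$ is connected (the standing assumption in the paper), we have $\beta_1 = |E_\Gamma| - n + 1$, and so the corresponding forests have exactly $|E_F| = n - 1$ edges.

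The key combinatorial step is to verify that any such rooted forest is in fact a rooted spanning tree of $\Gamma$. A forest whose underlying vertex set has size $k$ carries at most $k - 1$ edges, so $|E_F| = n - 1$ forces $F^*$ to be incident to all $n$ vertices of $\Gamma$; hence $F^*$ is a spanning tree. Conversely, every spanning tree of $\Gamma$ arises this way. Since a rooted forest on a graph consists of a spanning forest together with one distinguished root per connected component, and $F^*$ here has a single component, the possible rootings are in bijection with the $n$ vertices of $\Gamma$. Consequently the rooted forests in question number exactly $n \cdot \tau(\Gamma)$, where $\tau(\Gamma)$ denotes the number of spanning trees.

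Finally, applying Kirchhoff's theorem (the consequence of Theorem \ref{biggsthm} recalled immediately after it) gives $n \cdot \tau(\Gamma) = \lambda_1 \cdots \lambda_n$, which is exactly the right-hand side of (\ref{eigeneqn}). No step is a genuine obstacle; the only care required is the edge-count argument that forces $F^*$ to span, together with the bookkeeping that each unrooted spanning tree contributes precisely $n$ rooted versions under the Bernardi–Klivans notion of root in the graph case.
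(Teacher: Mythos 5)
Your proposal is correct and follows essentially the same route as the paper: Corollary \ref{rooted1dim} at $k=\beta_1$, the identification of the resulting rooted forests with rooted spanning trees, the factor of $n$ for the choice of root, and Kirchhoff's theorem. The only difference is that you spell out the edge-counting argument forcing $F^*$ to be a spanning tree, which the paper leaves implicit.
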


\begin{proof}
Observe that
\begin{align*}
    |\MM_{\beta_1}(\Gamma)|
    &= |\{ F \text{ rooted forest} \ | \ F^* \subseteq \Gamma \text{ and } |E_F| = |E_\Gamma|-\beta_1(\Gamma) \}| \\
    &= |\{ T \text{ tree} \ | \ \text{$T$ rooted spanning tree of $\Gamma$} \}|\\
    &= |V_\Gamma| \cdot |\{ T \ | \ \text{$T$ spanning tree of $\Gamma$}\}|\\
    &= |V_{\Gamma}| \cdot \left( \frac{1}{|V_{\Gamma}|} \lambda_1 \cdots \lambda_{n} \right) &\text{by Kirchhoff's Theorem} \\
    &= \lambda_1 \cdots \lambda_{n}
\end{align*}
as desired.
\end{proof}

\begin{remark} \label{morsecomplex}
It is worth drawing a connection between this corollary and the Morse (simplicial) complex, an object of current interest in discrete Morse theory. The Morse complex of $\Gamma$, $\mathfrak{M}(\Gamma)$, is the simplicial complex whose $n$-simplices are precisely the acyclic $n$-matchings of $\mathcal{H}_\Gamma$. (One can of course talk of the Morse complex of a general simplicial complex.) It was shown \cite[Proposition 10]{ayala} that the the number of simplicies of maximal dimension in $\mathfrak{M}(\Gamma)$, for a graph $\Gamma$ on $n$ vertices, is $($\ref{maineq}$)$ as above. Corollary \ref{eigen} proves this statement, as these cells are precisely in correspondence with the elements of $\MM_{\beta_1}(\Gamma)$. For more on the Morse complex, see \cite[Chapter 7]{scoville}.
\end{remark}

\begin{center}
    \captionsetup{type=figure}
    \includegraphics[width=.35\linewidth]{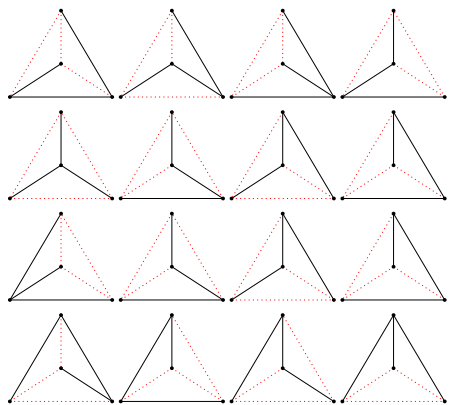}
    \captionof{figure}{All the possible 16 minimal critical edge configurations on the wheel graph $W_4$. Note that the black edges represent the remainders, hence every graph in the above figure represents $4$ different gradients; one for every different vertex considered as a root.}
\end{center}

\

Observe that when $f$ is a discrete Morse function on a graph $\Gamma$ where $c_1(f)=\beta_1 + k$ for some $0\leq k \leq |E|-\beta_1(\Gamma)$, we have that the number of forests is given by
$$\beta_0(R(\Gamma,f)) = c_0(f|_{R(\Gamma,f)}) + c_1(f|_{R(\Gamma,f)}) + \beta_1(R(\Gamma,f)) = c_0(f|_{R(\Gamma,f)}),$$
where the first equality is due to the second weak Morse inequality. Note that, in general, $c_0(f|_{R(\Gamma,f)}) \leq c_0(f)$, as the LHS counts the number of critical vertices that are \textit{not} embedded into the edges that are critical.

We can now present the main result in the $1$-dimensional case.

\begin{theorem} \label{charpoly}
If $\Gamma$ is a graph, then
$$\det(\Delta + \lambda I) = \sum_{i=0}^{n} |\MM_{m-i} (\Gamma)| \lambda^{n-i},$$
where $n = |V_\Gamma|$ and $m = |E_\Gamma|$.
\end{theorem}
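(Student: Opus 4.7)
The plan is to derive this from the Bernardi--Klivans formula (Theorem \ref{higherlap}), specialized to the one-dimensional setting, and then translate the resulting sum via the bijection of Corollary \ref{rooted1dim}. Applying Theorem \ref{higherlap} to $K=\Gamma$ with $d=1$ gives
\[
\det(\Delta + \lambda I) \;=\; \sum_{(F,R)} |H_0(F,R)|^2 \, \lambda^{|R|},
\]
where the sum ranges over all rooted forests $(F,R)$ of $\Gamma$ in the sense of Section 2.2.

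The key simplification I would establish first is that in the one-dimensional case the weight $|H_0(F,R)|^2$ always equals $1$. Viewing the matroid-forest $F \subseteq E_\Gamma$ together with all of $V_\Gamma$ as a subgraph, let $c$ be its number of connected components (counting isolated vertices). Then $\partial_1^F$ has rank $|F| = n - c$, so any root $R$ must have cardinality $|R| = c$; moreover, the row-dependency structure of $\partial_1^F$ (a single alternating dependency per component) forces $R$ to meet each component in exactly one vertex. Consequently, the inclusion $R \hookrightarrow F$ induces an isomorphism $H_0(R) \xrightarrow{\sim} H_0(F) \cong \ZZ^c$, and the long exact sequence of the pair collapses to give $H_0(F,R) = 0$; hence $|H_0(F,R)|^2 = 1$, and the Bernardi--Klivans sum reduces to a plain enumeration $\sum_{(F,R)} \lambda^{|R|}$.

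Finally, I would regroup this sum by the edge-cardinality $i = |E_F|$. For such a rooted forest we have $|R| = n - i$, and Corollary \ref{rooted1dim} identifies the set of rooted forests with $|E_F|=i$ with $\MM_{m-i}(\Gamma)$. Collecting terms yields
\[
\det(\Delta + \lambda I) \;=\; \sum_{i=0}^{n} |\MM_{m-i}(\Gamma)| \, \lambda^{n-i},
\]
which is exactly the claimed identity. The main (if modest) obstacle is the relative-homology computation in the middle step --- checking that the Bernardi--Klivans notion of ``root'' in the $1$-dimensional case really coincides with a choice of one distinguished vertex per connected component of the forest --- after which everything else is bookkeeping on ranks and reindexing.
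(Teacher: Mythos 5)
Your proof is correct, but it takes a genuinely different route from the paper's Section~3 argument. The paper deduces Theorem~\ref{charpoly} from the classical Biggs coefficient formula (Theorem~\ref{biggsthm}) together with Corollary~\ref{rooted1dim}; you instead specialize the Bernardi--Klivans identity (Theorem~\ref{higherlap}) to $d=1$, kill the homological weights, and then invoke the same Corollary~\ref{rooted1dim}. In effect you are running the paper's Section~4 strategy (the proof of Theorem~\ref{main}) in dimension one, which makes the graph case a genuine specialization of the general theorem rather than a separate classical fact. Your middle step is the right thing to check and your sketch of it is sound: the left kernel of $\partial_1^F$ is spanned by the constant vectors on the components of $(V_\Gamma,F)$, so a root is exactly a transversal of the components (isolated vertices included, each forced into $R$), whence $H_0(R)\to H_0(F)$ is an isomorphism and $H_0(F,R)=0$. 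One small bookkeeping point you should make explicit: the rooted forests of Corollary~\ref{rooted1dim} are remainders $R(\Gamma,f)$, i.e.\ edge-subgraphs with one root per \emph{non-isolated} component, while the Bernardi--Klivans roots live in all of $K_0$; the counts agree because each isolated vertex is forced to be a root and contributes a factor of $1$. A further payoff of your route is that it produces the correct multiplicity of each underlying forest automatically, namely the product of the component orders $\prod_j k_j$; the weight $|V_F|=\sum_j k_j$ appearing in the paper's statement of Theorem~\ref{biggsthm} and in the sentence ``repeated $|V_F|$-many times'' agrees with this only when at most one component has an edge (e.g.\ for $P_5$ and forests with three edges one gets $20$ from the products but $18$ from the $|V_F|$'s, and $20$ is the actual coefficient), so your derivation is the more robust of the two.
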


\begin{proof}
We have by Corollary \ref{rooted1dim} that all the elements of $\MM_{\beta_1+k}(\Gamma)$ are edge-subgraphs of $\Gamma$ with $(n-k-1)$-many edges and are forests. Each forest $F=R(\Gamma,f)$ for $f \in \MM_{\beta_1+k}(\Gamma)$ is repeated $|V_F|$-many times, one for each choice of a root, as accounted for. Lastly, observe that since we wish for $n-k-1=i$, this implies $k=n-i-1$, and using the formula for the Euler characteristic of a graph to simplify this expression, we may obtain 
$$\beta_1 + k = \beta_1 + n -i - 1 = m-i.$$
After applying Theorem \ref{biggsthm}, the proof is complete.
\end{proof}

It can be notably observed from the above formulation that the minimal (nonzero) term of $\det(\Delta + \lambda I)$ is the coefficient $|\MM_{0}(\Gamma)|$ of $\lambda^{n-m} = \lambda^{\chi(\Gamma)}$, as $\MM_k(\Gamma) = \emptyset$ for $k < 0$.

\begin{corollary}
The number of gradients $|\mathcal{M}(\Gamma)|$ on a graph $\Gamma$ is $\det(\Delta + I)$.
\end{corollary}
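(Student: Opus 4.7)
The plan is to derive the corollary directly by specializing Theorem~\ref{charpoly} at $\lambda = 1$. Since the theorem gives
\[
\det(\Delta + \lambda I) \;=\; \sum_{i=0}^{n} |\MM_{m-i}(\Gamma)|\, \lambda^{n-i},
\]
evaluating at $\lambda = 1$ collapses the right-hand side to $\sum_{i=0}^{n} |\MM_{m-i}(\Gamma)|$, which after reindexing $k := m-i$ becomes $\sum_{k=m-n}^{m} |\MM_k(\Gamma)|$.

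The one genuinely non-formal step is to argue that this sum exhausts $\mathcal{M}(\Gamma)$, i.e.\ that the pieces $\MM_k(\Gamma)$ for $k$ outside the window $[m-n,\, m]$ are empty. The upper bound $k \leq m$ is clear because $c_1(f) \leq |E_\Gamma| = m$. For the lower bound, I would invoke the First Weak Morse Inequality (Theorem~\ref{morseineq}) to conclude $c_1(f) \geq \beta_1(\Gamma)$, and then use Euler's identity for a graph with $c$ components, $\beta_1(\Gamma) = m - n + c \geq m - n$, so any $f \in \MM(\Gamma)$ contributes to some $\MM_k$ with $k \geq m - n$. Thus the range of the sum covers every nonempty piece, and the terms outside this range that are swept in by the bound $m-n$ vanish.

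Combining these observations, and partitioning $\MM(\Gamma) = \bigsqcup_{k \geq 0} \MM_k(\Gamma)$ according to the number of critical edges, we have
\[
\det(\Delta + I) \;=\; \sum_{k=m-n}^{m} |\MM_k(\Gamma)| \;=\; \sum_{k \geq 0} |\MM_k(\Gamma)| \;=\; |\mathcal{M}(\Gamma)|,
\]
yielding the statement. There is no real obstacle; the only care required is the bookkeeping to confirm that the summation range in Theorem~\ref{charpoly} matches the support of $|\MM_k(\Gamma)|$ as $k$ varies, which is exactly what the weak Morse inequality and the Euler characteristic formula guarantee.
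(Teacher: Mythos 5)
Your proposal is correct and matches the paper's (implicit) argument exactly: the corollary is intended as an immediate consequence of Theorem~\ref{charpoly} evaluated at $\lambda = 1$, with the summation range covering all nonempty $\MM_k(\Gamma)$ precisely because $\beta_1(\Gamma) \leq c_1(f) \leq m$ for every gradient. Your extra bookkeeping with the first weak Morse inequality and the Euler characteristic is the same observation the paper records just before stating the corollary, so there is nothing to add.
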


\begin{remark}
Theorem \ref{charpoly} has been previously obtained though in a different context: recall the definition of the Morse complex from Remark \ref{morsecomplex}. Chari and Joswig showed in \cite[Corollary 3.3]{chari} that the $f$-vector of $\mathfrak{M}(\Gamma)$ satisfies
$$\det(\Delta + \lambda I) = \sum_{i=0}^{n} f_{m-i} \lambda^{n-i},$$
which is exactly the content of the above theorem, as the component $f_k$ counts the acyclic $k$-matchings, hence corresponds to gradients having $m-k$ critical cells. 

This statement connecting the Laplacian and the $f$-vector of the Morse complex sadly does not hold in higher dimensions: as we will soon see, the coeffecients of the characteristic polynomial of the $d$th Laplacian $\Delta_d$ for a $d$-dimensional simplicial complex $K$ do not correspond with acyclic matchings on all of $\mathcal{H}_K$, but rather just the two top-most levels.
\end{remark}

\begin{example}
Consider the cycle $C_4$. The graph Laplacian matrix is
$$\Delta = \begin{pmatrix} 2 & -1 & 0 & -1 \\ -1 & 2 & -1 & 0 \\ 0 & -1 & 2 & -1 \\ -1 & 0 & -1 & 2 \end{pmatrix}$$
and so the characteristic polynomial is $\lambda^4 + 8\lambda^3+20\lambda^2+16\lambda$. Observe the below figure for the collection of all gradients, illustrating the content of the above theorem.

\

\begin{center}
    \captionsetup{type=figure}
    \includegraphics[width=.8\linewidth]{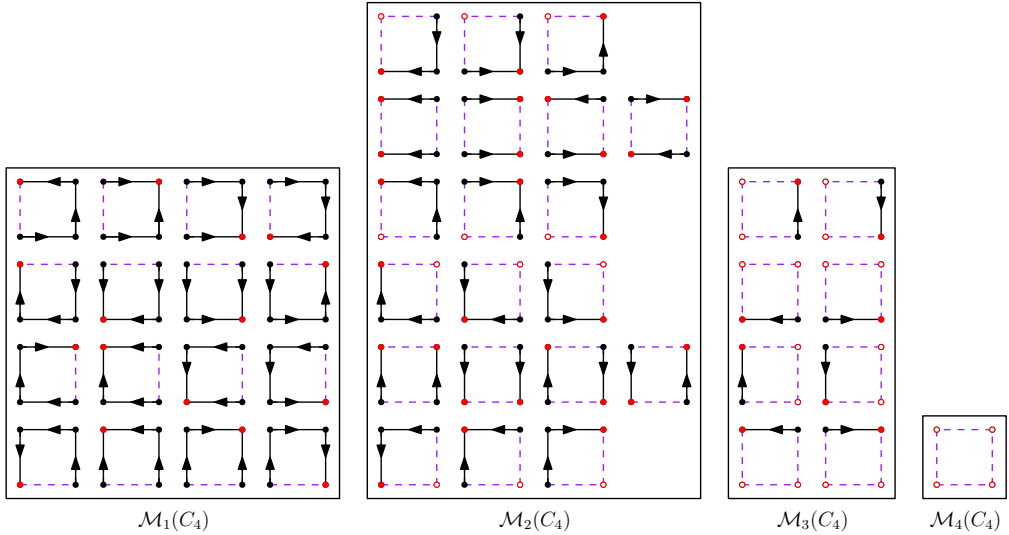}
    \captionof{figure}{The subsets $\MM_k(C_4)$ for $\beta_1(C_4) \leq k \leq |E_{C_4}|$. The dashed purple edges denote critical edges, the filled red circles denote critical vertices that are present in the the remainder, and the empty red circles denote the other critical vertices which are not present in the remainder.}
\end{center}

\end{example}

\begin{example}
Consider the star graph on $n+1$ vertices, $S_n$. Since $\beta_1(S_n)=0$, we are not ``obligated" to put any critical edges anywhere, we merely choose any edge, if any, to be critical. In the case of $|\MM_k(S_n)|$ for instance: we have all $\binom{n}{k}$-many choices for a critical edge. For every choice, the remainder requires a root, thus there are $(n+1-k)$-many vertices left to choose from. Therefore
$$|\MM_k(S_n)| = \binom{n}{k} (n+1-k),$$
and so by Theorem \ref{charpoly}, the characteristic polynomial of the Laplacian of $S_n$ is given by
$$\det(\Delta + \lambda I) = \sum_{i=0}^{n+1} \binom{n}{i}(i+1)\lambda^{n-i+1}.$$
\end{example}

\subsection{Matching polynomials and the adjacency matrix}

A simple, yet surprising connection between the graph Laplacian's characteristic polynomial and the characteristic polynomial of the adjacency matrix results from Theorem \ref{charpoly}. Recall that for a graph $\Gamma$, an $r$-matching is a matching having $r$-many elements. Let us denote the number of $r$-matchings on a graph $\Gamma$ by $m_r$.

\begin{definition}
The \textit{matching polynomial} of a graph $\Gamma$ with $n$ edges is given by
$$m_{\Gamma} (x) = \sum_{k \geq 0} (-1)^k m_k x^{n-2k}.$$
\end{definition}

As corollary to Theorem \ref{charpoly}, we can cross a bridge built in \cite{matching-adj}, proving that the matching polynomial and the characteristic polynomial of the adjacency matrix are equal if and only if the given graph is a forest. As gradients on a forest $F$ are equivalent to acyclic matchings on the Hasse diagram $\mathcal{H}_F$, which is certainly a forest as it is isomorphic to the subdivision of $F$, we obtain the following. Let $A_\Gamma$ and $\Delta_\Gamma$ denote the adjacency matrix and the Laplacian matrix of a graph $\Gamma$, respectively.

%
%

\begin{theorem}\label{forestthm}
Let $F$ be a forest graph on $n$-many vertices having $m$-many edges, and write $\det(\Delta_F+\lambda I) = \sum_{i = \beta_0}^n a_i \lambda^i$. Then the characteristic polynomial of the adjacency matrix of $\mathcal{H}_F$ is 
$$\det(A_{\mathcal{H}_F}+\lambda I) = \sum_{k=0}^m a_{n-k} \lambda^{n-2k}.$$
\end{theorem}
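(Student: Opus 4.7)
The plan is to show that both sides of the claimed identity enumerate the same combinatorial data on $F$, by going through the Hasse diagram $\mathcal{H}_F$ as an intermediary. The key observation already set up in the surrounding text is that $\mathcal{H}_F$ is isomorphic to the subdivision of $F$, so if $F$ is a forest then $\mathcal{H}_F$ is a forest as well (subdividing edges cannot create a cycle). In particular $\mathcal{H}_F$ is bipartite, which will let me pass between $\det(\lambda I - A_{\mathcal{H}_F})$ and $\det(\lambda I + A_{\mathcal{H}_F})$ without changing coefficients in absolute value.

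First I would apply the result of \cite{matching-adj} cited immediately before the theorem: since $\mathcal{H}_F$ is a forest, its adjacency characteristic polynomial coincides with its matching polynomial. This expresses $\det(A_{\mathcal{H}_F}+\lambda I)$ as a signed generating function $\sum_k (\pm 1)^k\, m_k(\mathcal{H}_F)\, \lambda^{?}$, where $m_k(\mathcal{H}_F)$ denotes the number of $k$-matchings of $\mathcal{H}_F$ (the exact exponent on $\lambda$ is determined by the vertex count of $\mathcal{H}_F$, which is $n+m$, and the sign pattern is pinned down via the bipartite symmetry of $\mathrm{Spec}(A_{\mathcal{H}_F})$).

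Next I would establish the bijection $m_k(\mathcal{H}_F) = |\mathcal{M}_{m-k}(F)|$. An edge of $\mathcal{H}_F$ is precisely a pair $(v,e)$ with $v\in e$, i.e.\ a possible matching arrow in a discrete vector field on $F$. A $k$-matching of $\mathcal{H}_F$ is therefore a discrete vector field on $F$ that pairs $k$ edges of $F$ with distinct incident vertices and leaves the remaining $m-k$ edges unpaired (hence critical). Because $\mathcal{H}_F$ has no cycles, the oriented Hasse diagram associated to any such matching can contain no directed cycle — all matchings on $\mathcal{H}_F$ are automatically acyclic — so the vector field is in fact a gradient, living in $\mathcal{M}_{m-k}(F)$. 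The reverse direction sends a gradient to its underlying matching on $\mathcal{H}_F$.

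Finally I would invoke Theorem \ref{charpoly}: reading off the coefficients of $\det(\Delta_F+\lambda I) = \sum_i a_i \lambda^i$ gives $a_i = |\mathcal{M}_{m-n+i}(F)|$, so $a_{n-k} = |\mathcal{M}_{m-k}(F)|$. Substituting this identification into the matching-polynomial expansion of $\det(A_{\mathcal{H}_F}+\lambda I)$ produces the claimed formula. The mathematical content is concentrated in the first two steps — the cited forest/adjacency/matching equivalence, and the easy fact that the forest structure of $\mathcal{H}_F$ makes every matching acyclic — so I do not expect a genuine conceptual obstacle. The main thing to be careful about is clerical: reconciling the sign $(-1)^k$ coming from the matching polynomial with the convention $\det(A_{\mathcal{H}_F}+\lambda I)$ (using bipartiteness of $\mathcal{H}_F$), and aligning the indices so that the exponent of $\lambda$ and the subscript on $a$ correspond through the identity $a_{n-k}=|\mathcal{M}_{m-k}(F)|$.
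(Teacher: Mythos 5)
Your proposal is correct and follows essentially the same route as the paper's (much terser) proof: invoke the forest/matching-polynomial/adjacency-polynomial equivalence of \cite{matching-adj} for $\mathcal{H}_F$, observe that every matching on the forest $\mathcal{H}_F$ is automatically acyclic and hence a gradient, and then read off $m_k = |\mathcal{M}_{m-k}(F)| = a_{n-k}$ from Theorem \ref{charpoly}. If anything, you are more careful than the paper about the sign conventions and about the exponent of $\lambda$ being governed by the vertex count $n+m$ of $\mathcal{H}_F$ — a bookkeeping point the paper's two-line proof silently elides.
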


\begin{proof}
It follows from the above discussion that by \cite{matching-adj}, $m_{\mathcal{H}_F}(\lambda) = \det(A_{\mathcal{H}_F} - \lambda I)$. Because $\mathcal{H}_F$ is a forest, all matchings are acyclic, so by Theorem \ref{charpoly}, $m_k = |\MM_{m-k}(\Gamma)| = a_{n-k}$. 
\end{proof}

\section{Gradients of General Simplicial Complexes}

Consider now some discrete Morse function $f \in \MM(K)$ where $K$ is a $d$-dimensional simplicial complex. We now associate a pair $(F,R)$, where $F$ is the set\footnote{To save time and avoid unnecessary over-precision, we sometimes use $F$ to also denote the \textit{complex generated} by the set of non-critical $d$-cells of $K$. Respectively with the root $R$.} of non-critical $d$-cells of $K$, and $R$ denotes the set of critical $(d-1)$-cells of $F \subseteq K$. As one expects from the notation, we will show that $(F,R)$ is a rooted forest of $K$, and this is indeed only dependent on the gradient, so the definition is well-defined.

\subsection{Characterizing gradients}





\begin{theorem} \label{rootedfor}
If $f \in \MM_\ell(K)$, then the pair $(F,R)$ is a rooted forest of $K$.
\end{theorem}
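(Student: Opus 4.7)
The plan is to leverage the acyclic matching structure of $V_f$ on the Hasse diagram $\mathcal{H}_K$ in order to produce a single invertible submatrix of $\partial_d^F$, which will simultaneously witness every required property. Because $f \in \MM_\ell(K)$, every cell of dimension strictly less than $d-1$ is critical, so every pair in the matching $M_f$ lies between levels $d-1$ and $d$; the non-critical $d$-cells form $F$, the critical $(d-1)$-cells form $R$, and the non-critical $(d-1)$-cells form $\bar R = K_{d-1}\setminus R$. The rule $\phi(\tau) := \sigma$ for $(\sigma,\tau) \in V_f$ then defines a bijection $\phi : F \to \bar R$, and the Morse inequalities applied to $\MM_\ell(K)$ give $|F| = |\bar R| = |K_d|-\ell$.

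The central claim is that the square $|\bar R|\times |F|$ submatrix $M := \partial_d^F[\bar R,\,F]$ is invertible. To prove this I would use Forman's theorem that $V_f$ admits no non-trivial closed $V$-path. This makes the relation ``$(\sigma,\tau)\prec(\sigma',\tau')$ iff some non-trivial $V$-path leads from $(\sigma,\tau)$ to $(\sigma',\tau')$'' a strict partial order on the pairs of $V_f$. Choose a linear extension of the \emph{opposite} order, and enumerate the pairs as $(\sigma_1,\tau_1),\ldots,(\sigma_k,\tau_k)$ so that $i<j$ forces $(\sigma_j,\tau_j)\not\prec(\sigma_i,\tau_i)$. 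If $i<j$ and $M[\sigma_i,\tau_j] = \partial_d[\sigma_i,\tau_j]\ne 0$, then $\sigma_i<\tau_j$ with $\sigma_i\ne \sigma_j$, so $\sigma_j,\tau_j,\sigma_i,\tau_i$ is a $V$-path, which contradicts the chosen ordering. Hence $M$ is lower triangular in this ordering, with diagonal entries $M[\sigma_i,\tau_i]=\partial_d[\sigma_i,\tau_i]=\pm 1$, so $\det M = \pm 1$.

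From the invertibility of $M$ the three required conclusions are immediate: (i) the columns of $\partial_d^F$ are linearly independent, because they already are after restricting to the rows indexed by $\bar R$, so $F$ is a forest of $K$; (ii) the rows of $\partial_d^F$ indexed by $\bar R$ have rank $|F|$, which equals the full column rank of $\partial_d^F$, so $R$ is relatively-free of $F$; (iii) those same rows are linearly independent, so $R$ is relatively-generating of $F$. Together (ii) and (iii) say $R$ is a root of $F$, and combined with (i) we obtain that $(F,R)$ is a rooted forest of $K$, as claimed.

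The main obstacle is the triangularization argument in the second paragraph: one has to carefully translate the acyclicity of $V_f$ into a linear ordering of the matched pairs that is \emph{compatible} with the incidence structure of $\partial_d$. Everything else is either bookkeeping of cardinalities forced by the Second Weak Morse Inequality, or a direct linear-algebraic consequence of $\det M \ne 0$.
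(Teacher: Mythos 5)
Your argument is correct, but it takes a genuinely different route from the paper's. The paper treats the three properties separately: $F$ is a forest because $c_d(f|_F)=0$ plus the first weak Morse inequality forces $\beta_d(F)=0$ and hence $\ker\partial_d^F=0$, and the two root conditions are then extracted from Lemma \ref{exclusion} by inspecting which faces of each $d$-cell lie in $\bar{R}$. You instead prove the single stronger statement that the square submatrix $\partial_d^F[\bar{R},F]$ is unimodular, by linearly extending the (acyclicity-guaranteed) partial order on matched pairs induced by $V$-paths and observing that the matrix becomes triangular with $\pm 1$ diagonal; forest-ness, relative freeness, and relative generation all fall out of $\det M=\pm 1$ at once. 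This is the standard ``Morse matching matrix is invertible'' device from algebraic Morse theory, and it buys you something real: it is immune to the cancellation issue that the paper's support-based argument for relative freeness glosses over (a $d$-cell of $F$ may have several faces in $\bar{R}$, not only its matched one, so one genuinely needs the triangular ordering to see that no integer combination of columns lands in $C_{d-1}(R)$), and since $|\det \partial_d^F[\bar{R},F]|$ computes $|H_{d-1}(F,R)|$ in the sense of \cite{higherdimforests}, your computation essentially yields Corollary \ref{conj} as a by-product. The cost is an up-front appeal to Forman's theorem that gradient fields admit no nontrivial closed $V$-paths, which the paper defers to its later acyclicity arguments (Theorem \ref{collapse}, Lemma \ref{last}). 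One bookkeeping point you should make explicit: because every cell of dimension $<d-1$ is critical, no $(d-1)$-cell is matched downward, so every non-critical $(d-1)$-cell is matched up into $F$; this is what identifies $\bar{R}$ (taken inside $F_{d-1}$) with the full set of non-critical $(d-1)$-cells and makes $M$ square.
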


\begin{proof}
Since $F$ is a subcomplex of $K$, $f|_F$ is a discrete Morse function on ${F}$, and by definition of $F$ we have $c_d\left( f|_{F} \right) = 0$. As $H_d(F) = \ker \partial_d^{F}$ since $F_{d+1} = \emptyset$, and because $c_d(f|_F) \geq \beta_d(F) = |H_{d}(F)|$ by the first weak Morse inequality, we obtain that $\ker \partial_d^{F} = 0$ so $F$ is a forest. 

\

We now show that $R$ is a root of ${F}$. If $F_d = \emptyset$, then $R = \emptyset$, and we are trivially done. Assume moving forward that $F_d$ is non-empty.

\

\textit{$R$ is relatively-free:} Observe that by Lemma \ref{exclusion}, we have that for all $\gamma \in F_d$, there exists a unique $\alpha \in \bar{R} = {F}_{d-1} \setminus R$ such that $\alpha < \gamma$. Hence $\alpha \in \supp \  \partial_d^F(\gamma)$. Lastly, as $C_{d-1}(F) \cong C_{d-1}(R) \oplus C_{d-1}(\bar{R})$, this implies $\im \partial_d^F \cap C_{d-1}(R) = \{0\}$, as every image of a non-zero element of $C_{d}(F)$ must have a non-trivial $C_{d-1}(\bar{R})$ component.

\

\textit{$R$ is relatively-generating:} Consider $\bar{R}$ and note that it is non-empty since $c_i(f)=|K_i|$, and since $c_d(f) = |K_d| - |F_d|$, we must necessarily have that $c_{d-1}(f) = |K_{d-1}| - |F_d|$ by the weak Morse inequality. Now let $\beta \in \bar{R} $. We then have that $\beta$ is not critical under $f$ on $K$ (and ${F}$). Since $c_{d-2}(f) = |K_{d-2}|$, by Lemma \ref{exclusion} it follows that for all $\tau \in K_{d-2}$ that there is no $(d-1)$-cell $\alpha > \tau$ such that $f(\alpha) \leq f(\tau)$. Yet as $\beta$ is not critical, by Lemma \ref{exclusion} applied to $\beta$, due to the above condition on the elements of $K_{d-2}$, we have that there exists some $d$-cell $\gamma > \beta$ such that $f(\gamma) \leq f(\beta)$.

But then $\gamma$ is not a critical $d$-cell by definition, hence it must be an element of $F_d$ and a cell of $F$. Since $\dim F = d$, we have that $(\beta,\gamma)$ is an arrow of the gradient induced by $f$ on $K$ (hence $F$). As $\beta$ is the only non-critical $(d-1)$-cell on the boundary of $\gamma$, we must have that the remaining boundary cells are critical. Therefore, if we write $\partial_d^{F}(\gamma)$ as in $(*)$, then there exists some $0 \leq j \leq d$ where $\gamma|_{[v_0,\dots,\widehat{v}_j,\dots,v_d]} = \beta$ and
$$\beta + \sum_{\begin{smallmatrix} 0 \leq i \leq d \\ i \neq j  \end{smallmatrix}} \gamma|_{[v_0,\dots,\widehat{v}_i,\dots,v_d]}  = \partial_d^{F}(\gamma) \in \im \partial_d^{F},$$
where $\beta \in \bar{R}$ and $\sum_{\begin{smallmatrix} 0 \leq i \leq d \\ i \neq j  \end{smallmatrix}} \gamma|_{[v_0,\dots,\widehat{v}_i,\dots,v_d]} \in C_d(R)$, as needed.
\end{proof}

Note that the above result indeed implies much of our $1$-dimensional work, as if $f$ is a discrete Morse function on a graph $\Gamma$, then we have that $(F,R)$ is precisely the data of $R(\Gamma,f)$. There is a topological viewpoint to these rooted forests as well: we necessarily have that $F \cong R$, that is, $F$ is (simplicially) homotopy equivalent to $R$. This theorem, along with Corollary \ref{conj}, answer two of the open questions posed by Mukherjee in \cite{ToRF} simultaneously, namely, classifying all simplicial forests which are homotopy equivalent to their roots and classifying all rooted forests satisfying \cite[Lemma 3.5]{ToRF}. 

\begin{theorem} \label{collapse}
Let $(F,R)$ be a rooted forest of a $d$-dimensional simplicial complex $K$. Then $(F,R)$ corresponds to a gradient if and only if $F$ simplicial collapses to $R$.
\end{theorem}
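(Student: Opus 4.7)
The proof will leverage the equivalence between discrete gradient vector fields on $K$ and acyclic matchings on the Hasse diagram $\mathcal{H}_K$ (orienting matched edges upward and unmatched edges downward). Under this equivalence, a rooted forest $(F,R)$ arising from some $f \in \MM(K)$ corresponds to an acyclic matching $M_f$ whose only matched edges lie in the top two levels of $\mathcal{H}_{\langle F \rangle}$; the matched pairs are precisely a fitting orientation $\psi : \bar{R} \to F$, and the critical cells are $R \cup (K_d \setminus F) \cup (K_{d-1} \setminus \langle F \rangle_{d-1}) \cup K_{\leq d-2}$. The theorem thus reduces to showing that a fitting orientation $\psi$ on $(F,R)$ is acyclic if and only if $\langle F \rangle$ simplicially collapses onto the subcomplex $\widetilde{R} := \langle R \rangle \cup \langle F \rangle_{\leq d-2}$.

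For the forward direction, I would begin with $f \in \MM(K)$ yielding the pair $(F,R)$ as in Theorem \ref{rootedfor}. The matching $M_f$ is acyclic, so its restriction to the top two levels of $\mathcal{H}_{\langle F \rangle}$ is an acyclic perfect matching between $F_d$ and $\bar{R}$. I would then construct an explicit simplicial collapse $\langle F \rangle \searrow \widetilde{R}$ by iteratively removing free pairs in an order dictated by a topological sort of the acyclic orientation on $F_d$: at each step pick a $d$-cell $\tau$ whose matched face $\sigma = \psi^{-1}(\tau)$ is free in the current complex (such $\tau$ exists by acyclicity, since a source in the directed graph of $V$-paths produces a $\tau$ with no incoming arrow, making $\sigma$ free), collapse the pair $(\sigma,\tau)$, and iterate.

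For the converse, if $\langle F \rangle$ collapses to $\widetilde{R}$, the collapse sequence removes precisely one $(d-1,d)$-pair at a time and terminates at $\widetilde{R}$, so the removed pairs define a perfect matching $M$ between $F_d$ and $\bar{R}$ that is a fitting orientation on $(F,R)$. This matching is automatically acyclic, since the existence of a collapse ordering forbids the closed gradient paths that would otherwise give rise to Hasse-diagram cycles. I would then extend $M$ to a matching $M'$ on all of $\mathcal{H}_K$ by leaving every remaining cell unmatched. Acyclicity is preserved under this extension because, as observed in the paper's footnote, every cycle in the oriented Hasse diagram from a matching is confined to two consecutive levels, and no matched edges are added outside the top two levels of $\langle F \rangle$. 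Forman's construction of a Morse function from an acyclic matching then produces $f \in \MM(K)$ whose associated rooted forest is precisely $(F,R)$.

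The main obstacle will be carefully setting up the dictionary between the combinatorial notion of an acyclic matching in the top two dimensions and the geometric notion of a sequence of elementary simplicial collapses, and in particular verifying that an acyclic ordering always produces a genuinely free pair at each step. Once this standard piece is in hand, the rest is a matter of organizing the data; the passage between $\langle F \rangle$ and $K$ is handled cleanly because all additional cells outside the top two levels of $\langle F \rangle$ remain critical and therefore do not interact with the matching.
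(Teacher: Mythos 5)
Your proposal is correct, and on the substantive (``if'') direction it is essentially the paper's argument in different clothing: you read the removed free pairs of a collapse sequence as a fitting orientation $\psi:\bar R\to F$ and argue that the collapse order rules out directed cycles, while the paper reverses the collapses into elementary expansions and derives a contradiction from a hypothetical cycle by showing it would force an expansion step re-adding cells already present. These are the same ordering argument, so do write out the short verification (a cycle $\sigma_{k_0},\tau_{k_0},\dots,\sigma_{k_m},\tau_{k_m}$ would force $\tau_{k_{i-1}}$ to be removed before $\tau_{k_i}$ for every $i$ cyclically, which is impossible) rather than merely asserting that the collapse ordering forbids closed gradient paths. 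Where you genuinely depart from the paper is the ``only if'' direction: the paper disposes of it by citing Forman's theorem that one may collapse along a gradient while preserving homotopy type, whereas you re-prove that result in this special case by repeatedly extracting a $d$-cell with no incoming $V$-path arrow, whose matched $(d-1)$-face is then free; this is a correct, self-contained alternative that makes the whole theorem independent of the citation. Your replacement of the collapse target $R$ by $\widetilde R=\langle R\rangle\cup\langle F\rangle_{\leq d-2}$ is also a sensible precisification of what the paper leaves implicit (only $(d-1,d)$-pairs are ever removed, so the lower-dimensional cells of $\langle F\rangle$ survive), and your explicit check that extending the matching by critical cells outside the top two levels preserves acyclicity fills a step the paper glosses over.
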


\begin{proof}
For the forward direction, this follows from one of the main results of Discrete Morse Theory, as stated by Forman in \cite[Theorem 3.3]{4cellcomplexes}, where we may collapse along the gradient and preserve the homotopy type of our complex.

Since $F$ simplicial collapses to $R$, there must be a sequence of elementary collapses
$$F=F_0 \searrow F_1 \searrow F_2 \searrow \cdots \searrow F_{n-1} \searrow F_n = R,$$
i.e. $F_{i+1} = F_{i}-\{\sigma,\tau\}$ where $(\sigma,\tau) \in (F_i)_{d-1}\times (F_i)_d$ are a pair of simplicies in $F_i$ such that $\sigma$ is a face of $\tau$ and $\sigma$ has no other cofaces. Reversing this, we instead obtain a sequence of elementary expansions
\begin{equation}\label{expand}\tag{$\dagger$}
R = R_0 \nearrow R_1 \nearrow R_2 \nearrow \cdots \nearrow R_{n-1} \nearrow R_{n} = F
\end{equation}
with $R_{i+1} = R_{i} \cup \{\sigma,\tau\}$ with $(\sigma,\tau) \in (R_{i+1})_{d-1}\times (R_{i+1})_d$ a pair of simplices not in $R_i$ where $\sigma$ is a face of $\tau$ and all other faces of $\tau$ are in $R_i$. Using this sequence, we can construct an acylic fitting orientation $\psi: \bar{R} \to F$ on $(F,R)$ as follows: for $0 \leq i \leq n-1$, let $\sigma_i$ and $\tau_i$ denote the cells added to $R_i$ in the elementary expansion to $R_{i+1}$. Since all faces besides $\sigma_i$ of $\tau_i$ are in $R_i$, $\sigma_i$ must not be in the root $R$. Therefore let $\psi(\sigma_i)=\tau_i$. As $R$ ultimately expands to $F$, and at each elementary expansion $R_i \nearrow R_{i+1}$ we added a $d$-cell from $F$ and a $(d-1)$-cell from $\bar{R}$ to $R_i$, this implies $\psi$ is a bijection, as needed.

We now show that $\psi$ is acyclic, i.e. the matching 
$$M_\psi = \{ (\sigma, \tau) \in F_{d-1} \times F_d \ | \ \sigma \in \bar{R} \text{ and } \psi(\sigma) = \tau \}$$
induced by $\psi$ on $\mathcal{H}_K$ is acyclic. Assume for contradiction that this is not the case. We then have a directed sequence
$$\sigma_{k_0}, \ \tau_{k_0}, \ \sigma_{k_1}, \ \tau_{k_1}, \ \dots, \ \sigma_{k_{m}}, \ \tau_{k_{m}},$$
for some $2 \leq m\leq n$, where $\psi(\sigma_{k_i}) = \tau_{k_i}$ and $\sigma_{k_{i+1}}<\tau_{k_i}$ for $0 \leq i \leq m$ and $\sigma_{k_0}<\tau_{k_{m}}$. Consider $\{\sigma_{k_m},\tau_{k_m}\}$, and note by construction of $\psi$, there exists some $0 \leq j \leq n$ in the sequence (\ref{expand}) such that $R_j = R_{j-1} \cup \{\sigma_{k_m},\tau_{k_m}\}$. Since $\sigma_{k_{m}}<\tau_{k_m-1}$ and $\tau_{k_{m-1}} = \psi(\sigma_{k_{m-1}})$, we must have that $R_{j+1} = R_{j} \cup \{\sigma_{k_{m-1}}, \tau_{k_{m-1}}\}$. Continuing this process, we obtain the subsequence of (\ref{expand}) of elementary expansions
$$R_j \nearrow R_{j+1} \nearrow R_{j+2} \nearrow \cdots \nearrow R_{j+(m-1)} \nearrow R_{j+m},$$
where $R_{j+i} = R_{j+(i-1)} \cup \{\sigma_{k_{m-i}},\tau_{k_{m-i}}\}$ for $0 \leq i \leq m$. But we have that $\sigma_{k_0} < \tau_{k_m}$, so we also must have that $R_{j+m+1} = R_{j+m} \cup \{\sigma_{k_m},\tau_{k_m}\}$. Yet this is not a valid elementary expansion, as these are simplicies that are already in $R_{j+m}$. Therefore this directed sequence cannot exist, hence $M_{\psi}$ is acyclic, and so $\psi$ corresponds to a gradient.
\end{proof}

The backwards direction in the above proof also shows that if $(F,R)$ corresponds to a gradient, then that gradient is necessarily unique, since there was only one possible fitting orientation we could construct. We prove this differently and explicitly in Lemma \ref{last}.

\

\begin{center}
\captionsetup{type=figure}
\includegraphics[width=1\linewidth]{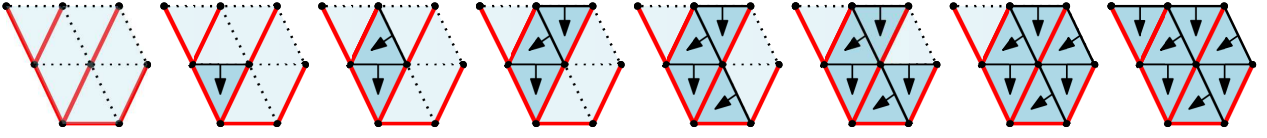}
\captionof{figure}{An example of a sequence of elementary expansions from a root $R$ to a forest $F$ giving rise to a discrete gradient vector field on the simplicial complex generated by $F$.}
\end{center}

\begin{corollary} \label{conj}
With the same notation as above, if $(F,R)$ is a rooted forest that corresponds to a discrete gradient vector field, then $H_{d-1}(F,R) \cong 0$.
\end{corollary}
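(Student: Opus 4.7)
The plan is to invoke Theorem \ref{collapse} to replace the hypothesis ``$(F,R)$ corresponds to a gradient'' with the much more concrete fact that $F$ simplicially collapses to $R$, and then convert this collapse into vanishing of relative homology through the long exact sequence of a pair.

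First, by Theorem \ref{collapse}, there is a sequence of elementary collapses
\[
F = F_0 \searrow F_1 \searrow F_2 \searrow \cdots \searrow F_{n-1} \searrow F_n = R,
\]
where each step removes a free pair $(\sigma_i, \tau_i)$ with $\sigma_i \in (F_i)_{d-1}$, $\tau_i \in (F_i)_d$, $\sigma_i < \tau_i$, and $\sigma_i$ having no other coface in $F_i$. For each single collapse $F_i \searrow F_{i+1}$, the relative chain complex $C_\bullet(F_i, F_{i+1})$ is concentrated in degrees $d-1$ and $d$, where it equals $\ZZ \cdot \sigma_i$ and $\ZZ \cdot \tau_i$ respectively, connected by the relative boundary map that sends $\tau_i \mapsto \pm \sigma_i$ (since $\sigma_i$ is the only face of $\tau_i$ outside $F_{i+1}$). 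This chain complex is acyclic, so $H_*(F_i, F_{i+1}) = 0$ in every degree.

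Next I would plug this into the long exact sequence of the pair $(F_i, F_{i+1})$ to conclude that the inclusion $F_{i+1} \hookrightarrow F_i$ induces an isomorphism $H_*(F_{i+1}) \xrightarrow{\sim} H_*(F_i)$ in all degrees. Iterating over $i = 0, 1, \ldots, n-1$ and composing, the inclusion $R \hookrightarrow F$ induces an isomorphism $H_*(R) \xrightarrow{\sim} H_*(F)$ for every $*$. Finally, the long exact sequence of the pair $(F,R)$,
\[
\cdots \to H_d(F,R) \to H_{d-1}(R) \xrightarrow{\sim} H_{d-1}(F) \to H_{d-1}(F,R) \to H_{d-2}(R) \xrightarrow{\sim} H_{d-2}(F) \to \cdots,
\]
then forces $H_{d-1}(F,R) = 0$ (indeed, $H_*(F,R) = 0$ in every degree).

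There is no serious obstacle here: the content of the corollary is entirely absorbed by Theorem \ref{collapse}, and everything after that is a standard topological consequence of the existence of a collapse (which is, in particular, a strong deformation retract). The only mild bookkeeping is checking the claim that a single elementary collapse kills relative homology, which is a direct calculation on a two-term chain complex. One could equivalently phrase the argument by saying $F$ deformation retracts onto $R$ and citing the homotopy invariance of relative homology, but the chain-level version has the advantage of being purely combinatorial and self-contained within the simplicial setup of the paper.
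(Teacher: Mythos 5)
Your proof is correct, and it rests on exactly the same pivot as the paper's: everything is reduced to Theorem \ref{collapse}. Where you diverge is in how the collapse is converted into the vanishing of $H_{d-1}(F,R)$. The paper passes to the quotient space, citing Hatcher to identify $H_{d-1}(F,R)$ with $H_{d-1}(F/R)$ and then asserting that $F/R$ is contractible because $F$ is homotopy equivalent to $R$; you instead stay at the chain level, observing that each elementary collapse contributes a two-term acyclic relative complex $\ZZ\cdot\tau_i \xrightarrow{\ \pm 1\ } \ZZ\cdot\sigma_i$, so the inclusion $F_{i+1}\hookrightarrow F_i$ induces isomorphisms on all homology groups, and then the long exact sequence of $(F,R)$ kills the relative homology. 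Your route is more self-contained and actually a bit more careful: the paper's phrasing ``$F$ is homotopy equivalent to $R$, hence $F/R$ is contractible'' quietly uses that the equivalence is realized by the inclusion (a deformation retract coming from the collapse), a point your argument makes explicit by tracking inclusion-induced maps. The paper's version buys brevity and a geometric picture; yours buys a purely combinatorial verification and yields the stronger conclusion $H_*(F,R)=0$ in all degrees, which you correctly note in passing.
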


\begin{proof}
We have that $H_{d-1}(F,R) \cong H_{d-1}(F/R)$ by \cite[Theorem 2.27]{hatcher} and \cite[Proposition 2.22]{hatcher}. By Theorem \ref{collapse}, $F$ is homotopy equivalent to $R$, hence $F/R$ is a contractible topological space. Therefore $H_{d-1}(F/R) \cong 0$.
\end{proof}

Note the converse of Corollary \ref{conj} does not hold in general (see Example \ref{bipyramid} below). It was proven by Mukherjee in \cite[Theorem 4.2]{ORF} that the converse does indeed hold when the simplicial complex is a triangulation of an orientable $d$-manifold.




Observe now that we may rewrite the sum of Theorem \ref{higherlap} as
\begin{equation*} \tag{$*$}
    \det(\Delta_d + \lambda I) = \sum_{i=0}^n  \sum_{\begin{smallmatrix} \text{$(F,R)$ rooted forest} \\ \text{with $|R|=i$} \end{smallmatrix}} |H_{d-1}(F,R)|^2 \lambda^{i}
\end{equation*}
where $n = |K_{d-1}|$. We now restrict our view to when $K$ is a triangulation of an orientable $d$-manifold, where we are aided by the work of Mukherjee, who closely studied the topology of higher dimensional rooted forests in this case. Mukherjee showed in \cite[Theorem 4.5]{ORF} that for all rooted forests $(F,R)$ of $K$, $F$ simplicial collapses to $R$, hence by Corollary \ref{conj}, we may take all the homological weight terms to be $1$. \rmat Therefore $(*)$ reduces to $\sum_{i=1}^n q_i \lambda^{n-i}$, where $q_i$ is the number of rooted forests $(F,R)$ of $K$ with $|R|=i$.

What then remains to be shown is a bijection between $\MM_{m-i}(K)$ and rooted forests of $K$ having $i$-many $(d-1)$-cells in the root. Due to \cite[Corollary 3.7]{ToRF}, we have that this map is surjective when $K$ is a triangulation of an orientable $d$-manifold. In particular, it is shown that there exist at least $|H_{d-1}(F,R)|$-many gradients giving rise to the rooted forest $(F,R)$. We show that this map is injective by proving for an arbitrary simplicial complex $K$, if a rooted forest comes $(F,R)$ from a gradient $\phi$, there cannot exist a distinct gradient $\widetilde{\phi}$ giving rise to the same rooted forest.

\begin{lemma} \label{last}
If $K$ is a simplicial complex, then the map
$$\MM_{m-i}(K) \ \rightarrow \  \left\{ (F,R) \text{ rooted forest of $K$} \ | \ |R|=i \right\},$$
with $m = |K_d|$ and $1 \leq i \leq |K_{d-1}|$, is injective.
\end{lemma}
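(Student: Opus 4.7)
The plan is to prove this via an alternating-cycle argument on the Hasse diagram. Suppose $\phi, \widetilde{\phi} \in \MM_{m-i}(K)$ are two gradients that both map to the same rooted forest $(F,R)$, so in particular $|F| = m - (m-i - |K_{d-1}| + i) = |K_d| - (m-i)$... more directly, the cardinality constraints in the definition of $\MM_{m-i}(K)$ force $c_d = m-i$ and $c_{d-1} = (m-i) - |K_d| + |K_{d-1}|$, and since $c_n = |K_n|$ for $n < d-1$, both $\phi$ and $\widetilde{\phi}$ consist \emph{only} of matched pairs $(\sigma,\tau) \in K_{d-1} \times K_d$. The data of $(F,R)$ already records exactly which $d$-cells and which $(d-1)$-cells are critical (namely $K_d \setminus F$ and $R$, respectively), so $\phi$ and $\widetilde{\phi}$ coincide on the set of critical cells and correspond precisely to two fitting orientations $\psi, \widetilde{\psi}\colon \bar{R} \to F$, i.e.\ two perfect matchings on the bipartite subposet of $\mathcal{H}_K$ whose parts are $\bar{R}$ and $F$ with an edge whenever $\sigma < \tau$.

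The next step is to assume for contradiction that $\psi \neq \widetilde{\psi}$ and examine their symmetric difference. Because $\psi$ and $\widetilde{\psi}$ are perfect matchings of the same bipartite graph, every vertex has degree $0$ or $2$ in $\psi \triangle \widetilde{\psi}$, so this symmetric difference decomposes into a disjoint union of even-length cycles whose edges alternate between $\psi$ and $\widetilde{\psi}$. Since $\psi \neq \widetilde{\psi}$, at least one such cycle exists; write it as
\[
\sigma_0 \,\text{--}\, \tau_0 \,\text{--}\, \sigma_1 \,\text{--}\, \tau_1 \,\text{--}\, \cdots \,\text{--}\, \sigma_{k-1} \,\text{--}\, \tau_{k-1} \,\text{--}\, \sigma_0,
\]
with $\psi(\sigma_j) = \tau_j$ and $\widetilde{\psi}(\sigma_{j+1}) = \tau_j$ (subscripts mod $k$). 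In particular $\sigma_j < \tau_j$ and $\sigma_{j+1} < \tau_j$ in $K$.

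The final step is to translate this into the oriented Hasse diagram $M_\psi$ of $\psi$, under the convention from Section 2.3 that matched edges are oriented upward and unmatched covering edges are oriented downward. The edges $\sigma_j \,\text{--}\, \tau_j$ belong to $\psi$, so they are oriented as $\sigma_j \to \tau_j$; the edges $\tau_j \,\text{--}\, \sigma_{j+1}$ are covering edges of $\mathcal{H}_K$ but are \emph{not} in $\psi$ (they belong to $\widetilde{\psi}$), so they are oriented as $\tau_j \to \sigma_{j+1}$. Reading the cycle in order yields the directed closed walk
\[
\sigma_0 \to \tau_0 \to \sigma_1 \to \tau_1 \to \cdots \to \sigma_{k-1} \to \tau_{k-1} \to \sigma_0
\]
in the oriented Hasse diagram of $\psi$, contradicting the acyclicity of $M_\psi = M_\phi$. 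Hence $\psi = \widetilde{\psi}$, so $\phi = \widetilde{\phi}$ and the map is injective.

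The main obstacle is essentially bookkeeping: one must verify that the restriction $c_n(f) = |K_n|$ for $n < d-1$ genuinely forces every matched pair of the gradient to lie in the top two dimensions (so that the rooted-forest data fully encodes the sets of matched cells) and that the convention for orienting $\mathcal{H}_K$ flips unmatched covering edges downward, since this is what converts an alternating cycle in the symmetric difference of two matchings into a directed cycle with respect to \emph{either} of them. With these two points pinned down, the alternating-cycle argument is standard and no further input from the topology of $K$ is needed, which is why the lemma holds for arbitrary simplicial complexes.
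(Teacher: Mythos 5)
Your proof is correct and is essentially the paper's own argument: the paper also reduces to two fitting orientations $\psi,\widetilde{\psi}\colon\bar{R}\to F$, writes $\widetilde{\psi}=\psi\circ\pi$ for a nontrivial permutation $\pi$, and extracts a directed cycle in the oriented Hasse diagram from a cycle of $\pi$ of length $\geq 2$ --- which is exactly your alternating cycle in $\psi\mathbin{\triangle}\widetilde{\psi}$ viewed through the permutation rather than the symmetric difference (the paper derives the contradiction from acyclicity of $M_{\widetilde{\phi}}$ rather than $M_{\phi}$, which is immaterial). Your preliminary bookkeeping that all matched pairs lie in $K_{d-1}\times K_d$ matches the paper's opening reduction as well.
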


\begin{proof}
Begin by assuming for contradiction that it does not hold. Then there exists two discrete Morse functions $\phi, \widetilde{\phi} \in \MM_{m-i}(K)$ such that they induce the same rooted forest $(F,R)$. By the definition of rooted forests induced from discrete Morse functions, we have that $\phi$ and $\widetilde{\phi}$ have the same critical cells in dimensions $d$ and $d-1$. In other words, the corresponding subsets $M_\phi, M_{\widetilde{\phi}} \subseteq E(\mathcal{H}_{K})$ acyclically match the subsets $F \subseteq K_d$ and $\bar{R} \in K_{d-1}$ two different ways. We claim this is impossible.

Let $\bar{R} = \{\sigma_1, \dots, \sigma_k\}$, $F = \{\tau_1, \dots, \tau_k\}$ for some $k \in \NN$, and have $\psi, \widetilde{\psi}$ denote the fitting orientations corresponding to the gradients of $\phi, \widetilde{\phi}$, respectively. Assume without loss of generality that $\psi(\sigma_j) = \tau_{j}$ for $1 \leq j \leq k$. It then follows that $\widetilde{\psi}(\sigma_j)=\tau_{{\pi(j)}}$ for some permutation $\pi \in S_k$. Note that $\pi \neq e$, as otherwise $\phi = \widetilde{\phi}$. We may then rewrite $\pi$ as a product of disjoint cycles $\gamma_1 \gamma_2 \cdots \gamma_n$, and consider a cycle having length greater than $1$, say (without loss of generality) $\gamma_1 = (\alpha_1 \ \alpha_2 \ \alpha_3 \cdots \alpha_m)$. Thus we have that
$$\widetilde{\psi}(\sigma_{\alpha_1}) = \tau_{\alpha_2}, \  \widetilde{\psi}(\sigma_{\alpha_2}) = \tau_{\alpha_3}, \  \dots, \  \widetilde{\psi}(\sigma_{\alpha_{m-1}}) = \tau_{\alpha_m}, \  \widetilde{\psi}(\sigma_{\alpha_m}) = \tau_{\alpha_1}.$$
But note that we also have that $\sigma_{\alpha_j} < \tau_{\alpha_j}$ for $1 \leq j \leq m$ under the poset relation in $\mathcal{H}_K$. Therefore there is in-fact a directed cycle in the fitting orientation $\widetilde{\psi}$:
$$  \sigma_{\alpha_1}, \ \widetilde{\psi}(\sigma_{\alpha_1}) = \tau_{\alpha_2}, \ \sigma_{\alpha_2}, \widetilde{\psi}(\sigma_{\alpha_2}) = \tau_{\alpha_3}, \  \dots, \ \sigma_{\alpha_m}, \ \widetilde{\psi}(\sigma_{\alpha_m}) = \tau_{\alpha_1}, \ \sigma_{\alpha_1}.$$
But this contradicts that $M_{\widetilde{\phi}}$ is an acyclic matching, therefore the map must be injective.
\end{proof}

With the above work and Lemma \ref{last}, we have proven the following result.

\begin{theorem} \label{main}
If $K$ is a triangulation of an orientable $d$-manifold, then
\begin{equation*} \tag{$\dagger$} \label{maineq}
    det( \Delta_d + \lambda I) = \sum_{i=0}^{n} |\mathcal{M}_{m-i} (K)| \lambda^{n-i},
\end{equation*}
where $m = |K_d|$ and $n = |K_{d-1}|$.
\end{theorem}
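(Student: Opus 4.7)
The plan is to combine the Bernardi--Klivans generating function (Theorem \ref{higherlap}) with the gradient/rooted-forest dictionary developed in Section~4. Starting from
$$\det(\Delta_d + \lambda I) \;=\; \sum_{(F,R)\text{ rooted forest}} |H_{d-1}(F,R)|^{2}\, \lambda^{|R|},$$
the first move is to show that on a triangulation of an orientable $d$-manifold every homological coefficient equals $1$. For this I would invoke Mukherjee's Theorem~4.5 of \cite{ORF}, which asserts that in this setting every rooted forest $(F,R)$ admits a simplicial collapse $F\searrow R$. The argument already used to prove Corollary \ref{conj}---namely $F/R\simeq *$ combined with $H_{d-1}(F,R)\cong H_{d-1}(F/R)$---then forces $H_{d-1}(F,R)\cong 0$ for every rooted forest, so the Bernardi--Klivans sum collapses to $\sum_{j=0}^{n} q_j\,\lambda^{j}$, where $q_j$ denotes the number of rooted forests with $|R|=j$.

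The second step is to identify $q_j$ with an appropriate $|\mathcal{M}_\ell(K)|$ by producing a bijection. Theorem \ref{rootedfor} supplies a map $\Phi:\mathcal{M}_\ell(K)\to\{\text{rooted forests of }K\}$, and applying the Second Weak Morse Inequality to the defining data of $\mathcal{M}_\ell(K)$ (all cells in dimension $<d-1$ critical, and $c_d(f)=\ell$) yields $|R|=\ell-m+n$ for every image. Lemma \ref{last} gives the injectivity of $\Phi$ on each $\mathcal{M}_\ell(K)$, using only acyclicity of the induced Hasse matching. For surjectivity, one combines Theorem \ref{collapse}---every rooted forest with $F\searrow R$ arises from \emph{some} gradient---with \cite[Corollary 3.7]{ToRF}, which in the orientable-manifold case supplies exactly $|H_{d-1}(F,R)|=1$ preimages. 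Hence $\Phi$ restricts to a bijection $\mathcal{M}_{m-n+j}(K) \leftrightarrow \{(F,R):|R|=j\}$, so $q_j=|\mathcal{M}_{m-n+j}(K)|$, and a final reindexing $i=n-j$ transforms $\sum_{j=0}^{n} q_j\,\lambda^{j}$ into $\sum_{i=0}^{n} |\mathcal{M}_{m-i}(K)|\,\lambda^{n-i}$, as required.

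The hard part here is genuinely structural rather than algebraic or combinatorial: everything rests on the orientable-manifold hypothesis, which enters through Mukherjee's collapsing theorem at two distinct points---first to kill the homological weights in Bernardi--Klivans, and second to obtain surjectivity of $\Phi$ via \cite[Corollary 3.7]{ToRF}. The principal risk is ensuring these two uses do not create a circular dependency among Theorem \ref{rootedfor}, Theorem \ref{collapse}, and Lemma \ref{last}; once this is checked, the remaining computations (the Morse-inequality bookkeeping giving $|R|=\ell-m+n$ and the change of variable in the final sum) are entirely routine.
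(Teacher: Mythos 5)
Your proposal is correct and follows essentially the same route as the paper: Bernardi--Klivans plus Mukherjee's collapsing theorem to trivialize the homological weights via Corollary \ref{conj}, then the bijection $\mathcal{M}_{m-i}(K)\leftrightarrow\{(F,R)\}$ with injectivity from Lemma \ref{last} and surjectivity from Theorem \ref{collapse} together with \cite[Corollary 3.7]{ToRF}. Your bookkeeping $|R|=\ell-m+n$ and the reindexing $i=n-j$ are in fact handled more carefully than in the paper's own exposition, and there is no circularity among Theorem \ref{rootedfor}, Theorem \ref{collapse}, and Lemma \ref{last}, since none of these results depends on the others.
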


Note that we've also thus shown the following more general statement as well.

\begin{theorem} \label{main2}
If $K$ is a simplicial complex such that for every rooted forest $(F,R)$, the forest $F$ simplicial collapses to the root $R$, then equation $($\ref{maineq}$)$ is satisfied.\rmat
\end{theorem}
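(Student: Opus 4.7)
The plan is to recycle the argument given for Theorem~\ref{main}, with the hypothesis of Theorem~\ref{main2} substituting for the topological input (Mukherjee's collapsibility theorem on orientable manifolds) used there. Bernardi--Klivans (Theorem~\ref{higherlap}) already supplies the master identity
$$\det(\Delta_d + \lambda I) = \sum_{(F,R)} |H_{d-1}(F,R)|^2 \lambda^{|R|},$$
so the work reduces to two tasks: showing that each homological weight $|H_{d-1}(F,R)|^2$ equals $1$, and showing that the rooted forests of $K$ are in bijection with $\bigsqcup_\ell \MM_\ell(K)$ in a way that tracks $|R|$ correctly.

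For the weights, I would invoke the hypothesis $F \searrow R$ together with the backward direction of Theorem~\ref{collapse}: this produces, for each rooted forest $(F,R)$, a discrete gradient vector field whose associated rooted forest (as in Theorem~\ref{rootedfor}) is precisely $(F,R)$. With such a gradient now guaranteed to exist, Corollary~\ref{conj} forces $H_{d-1}(F,R) \cong 0$, hence $|H_{d-1}(F,R)|^2 = 1$ uniformly. For the bijection, injectivity of the assignment $f \mapsto (F,R)$ is Lemma~\ref{last}, and surjectivity is exactly the existence statement just obtained; so for each valid $\ell$ the set $\MM_\ell(K)$ is in bijection with the rooted forests whose $|R|$ equals the value dictated by the Second Weak Morse Inequality.

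What remains is index bookkeeping. Using the identity $|R| = c_{d-1}(f) = \ell - m + n$ recorded right after the definition of $\MM_\ell(K)$, a gradient $f \in \MM_{m-i}(K)$ has $|R| = n - i$. Re-grouping the master identity by the value of $|R|$ and reindexing by $i := n - |R|$ turns $\sum_{(F,R)} \lambda^{|R|}$ into $\sum_{i=0}^{n}|\MM_{m-i}(K)|\,\lambda^{n-i}$, which is the desired expression. The one spot that requires care is verifying that Corollary~\ref{conj}'s hypothesis is genuinely met for \emph{every} rooted forest, not merely those known a priori to arise from gradients; the backward direction of Theorem~\ref{collapse} is the lever that makes this uniform, and the hypothesis of Theorem~\ref{main2} is tailored precisely so this lever can be pulled. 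I expect no substantive obstacle beyond carefully confirming the index shift.
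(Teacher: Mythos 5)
Your proposal is correct and follows essentially the same route the paper intends: the paper derives Theorem~\ref{main2} as a byproduct of the argument for Theorem~\ref{main}, with the collapsibility hypothesis standing in for Mukherjee's theorem on orientable manifolds, the backward direction of Theorem~\ref{collapse} plus Corollary~\ref{conj} killing the homological weights and supplying surjectivity, and Lemma~\ref{last} supplying injectivity. Your index bookkeeping ($f \in \MM_{m-i}(K)$ giving $|R| = n-i$) is the correct reading of the definitions, so nothing further is needed.
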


We also have a good understanding of how the gradient generating function in $($\ref{maineq}$)$ behaves when it is not equal to $\det( \Delta_d + \lambda I)$. As the elements of $\mathcal{M}_i(K)$ correspond to a unique rooted forest by Lemma \ref{last} for arbitrary complexes, the main issue is that not every rooted forest may be induced by a gradient, i.e. surjectivity of the map in the lemma. Therefore the below statement follows from our above work and Theorem \ref{higherlap}.

\begin{porism} \label{pathology}
Let $K$ be an arbitrary $n$-dimensional simplicial complex. Then, for $0 \leq i \leq n$, the coefficient of $\lambda^i$ in $\sum_{i=0}^{n} |\mathcal{M}_{m-i} (K)| \lambda^{n-i}$ is less than or equal to the coefficient of $\lambda^i$ in $det( \Delta_d + \lambda I)$. The difference $\varepsilon_i$ between the coefficients is
$$\varepsilon_i = \sum_{\Lambda_i(K)} |H_{d-1}(F,R)|^2$$
where the set $\Lambda_i(K)$ consists of the rooted forests $(F,R)$ of $K$ with $|R|=i$ where $F$ does not simplicial collapse to $R$.
\end{porism}



\begin{corollary}
Let $K$ be a simplicial triangulation of a non-orientable $d$-manifold. Then $\varepsilon_{s}>0$, where $s=|K_{d-1}|-|K_d|$.
\end{corollary}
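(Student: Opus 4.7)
The plan is to exhibit a single rooted forest $(F,R)$ with $|R|=s$ that lies in the pathological set $\Lambda_s(K)$; by Porism~\ref{pathology} this alone forces $\varepsilon_s \geq |H_{d-1}(F,R)|^2 \geq 1 > 0$. The idea is that non-orientability simultaneously produces a forest of maximum possible size at the top dimension \emph{and} a homotopical obstruction to collapse.

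The construction is as follows. Take $F := K_d$. Since $K$ is a closed non-orientable $d$-manifold, $H_d(K;\ZZ) = 0$, so $\partial_d^K$ is injective and its columns are linearly independent; thus $F = K_d$ is itself a forest. Because the row rank of $\partial_d^K$ equals its column rank $|K_d|$, I can choose $\bar R \subseteq K_{d-1}$ of size $|K_d|$ whose corresponding rows form a basis of the row space, and set $R := K_{d-1} \setminus \bar R$. This $R$ is a root of $F$, and $|R| = |K_{d-1}| - |K_d| = s$, as required.

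The core step is to show that this $F$ does not simplicially collapse to $R$. Since every simplex of a closed $d$-manifold lies in some $d$-simplex, the subcomplex generated by $F = K_d$ is all of $K$, while the subcomplex generated by $R$ has dimension at most $d-1$. A collapse from $F$ to $R$ would, by (the reasoning in) Theorem~\ref{collapse}, force a homotopy equivalence between $K$ and a complex of dimension $\leq d-1$. However, Poincar\'e duality with $\ZZ/2$-coefficients (which holds for every closed manifold, orientable or not) gives $H_d(K;\ZZ/2) \cong \ZZ/2 \neq 0$, whereas the $d$-th $\ZZ/2$-homology of any complex of dimension $\leq d-1$ vanishes. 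The contradiction places $(F,R)$ in $\Lambda_s(K)$, which suffices.

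The main obstacle is pinpointing an invariant that distinguishes non-orientable from orientable manifolds in a form the collapse criterion of Theorem~\ref{collapse} can digest: integral $H_{d-1}$ alone does not separate the two cases, so one needs to pass to $\ZZ/2$-coefficients where the orientation sheaf trivializes. I would also make explicit the standing assumption that the manifold is closed, so that Poincar\'e duality applies without boundary corrections.
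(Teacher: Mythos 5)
Your proposal is correct, and its skeleton coincides with the paper's: both take $F=K_d$, pick an arbitrary root $R$ (automatically of size $s=|K_{d-1}|-|K_d|$), and show that $F$ cannot simplicially collapse to $R$, so that $(F,R)\in\Lambda_s(K)$ and Porism \ref{pathology} gives $\varepsilon_s>0$. Where you genuinely diverge is in the obstruction to collapse. The paper cites Mukherjee's result that for a non-orientable triangulation $H_{d-1}(F;\ZZ)$ contains torsion, and contrasts this with the fact that $H_{d-1}(R;\ZZ)$, being the top homology of a $(d-1)$-dimensional complex, is torsion-free. You instead observe that the subcomplex generated by $K_d$ is all of $K$, that $H_d(K;\ZZ/2)\cong\ZZ/2$ by mod-$2$ Poincar\'e duality for any closed manifold, and that a complex of dimension at most $d-1$ has vanishing $d$-th homology. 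The two obstructions are two faces of the same coin (the universal coefficient theorem converts $H_d(K;\ZZ)=0$ together with $H_d(K;\ZZ/2)\neq 0$ into $2$-torsion in $H_{d-1}(K;\ZZ)$), but your route is more self-contained: it replaces the citation of \cite[Proposition 3.8]{ToRF} by a standard duality fact, and it also supplies two details the paper leaves implicit, namely that $F=K_d$ is a forest (from $\ker\partial_d=H_d(K;\ZZ)=0$) and that a root of the required size exists. Your remark that the manifold must be closed for duality (and for the generated subcomplex to be all of $K$) is well taken; the paper tacitly assumes this as well, since its cited source does. One small simplification: you do not need to route the collapse-implies-homotopy-equivalence step through Theorem \ref{collapse}; a simplicial collapse is a deformation retraction on the nose.
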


\begin{proof}
By \cite[Proposition 3.8]{ToRF}, if $K$ is the triangulation of a non-orientable $d$-manifold, we then have that all of $F=K_d$ is a forest that contains torsion in $H_{d-1}(F)$. Let $R$ be a root of $F$. Then $H_{d-1}(R)$ does not contain torsion due to being the top homology group of $R$, so  $F$ cannot simplicial collapse to $R$. Thus by Theorem \ref{collapse}, $(F,R)$ does not correspond to a gradient. By \cite[Lemma 14]{higherdimforests}, $(F,R)$ is a rooted forest a simplicial complex $K$ if and only if $|\bar{R}|=|F|$ and $H_d(F,R) = 0$. Since $F=K_d$, we have that $|R| = |K_{d-1}|-|K_d|$, thus $\varepsilon_s>0$.
\end{proof}

\begin{example} \label{mobfail}
Let $K$ be the simplicial complex that arises from the triangulation of the M\"{o}bius strip that appears in Figure \ref{proj_mobius}. We have that
\begin{align*}
    &det( \Delta_d + \lambda I) \quad &= \quad &\lambda^{10} + 15\lambda^9 + 85\lambda^8 + 225\lambda^7 +275\lambda^6 + 125\lambda^5, \\
    &\sum_{i=0}^{n} |\mathcal{M}_{m-i} (K)| \lambda^{n-i} \quad &= \quad &\lambda^{10} + 15\lambda^9 + 85\lambda^8 + 225\lambda^7 +275\lambda^6 + 121\lambda^5.
\end{align*}
In this case $\Lambda_5(K)$ is non-empty, and has precisely the one rooted forest $(F,R)$: the case where $F=K_2$ are all the $2$-cells and $R$ is the boundary of the strip, as highlighted in red in Figure \ref{proj_mobius}. Hence we obtain that
$$\varepsilon_5 = |H_1(F,R)|^2 = |\ZZ/2\ZZ|^2 = 2^2 = 4.$$
\end{example}

Let us illustrate a strange example where we do not have equality in $($\ref{maineq}$)$. The below example is not the triangulation of any manifold. Though, it is curious as a counterexample, as it is $\mathbb{Z}$-acyclic in positive codimension ($\ZZ$-APC), i.e. $H_i(K;\ZZ)=0$ for all $i<\dim K$, which is a property satisfied by many complexes of combinatorial interest \cite{simptrees}.

\begin{example} \label{bipyramid}
Consider the case where $K$ is the equatorial bipyramid, which is the $2$-dimensional simplicial complex having seven $2$-cells and is the image under $\partial_3$ of the complex generated by $\{0,1,2,3\}$ and $\{1,2,3,4\}$. Here, we have that
\begin{align*}
    &det( \Delta_d + \lambda I) \quad &= \quad &\lambda^9 + 21\lambda^8 + 174\lambda^7+710\lambda^6+1425\lambda^5+1125\lambda^4, \\
    &\sum_{i=0}^{n} |\mathcal{M}_{m-i} (K)| \lambda^{n-i} \quad &= \quad &\lambda^9 + 21\lambda^8 + 174\lambda^7+710\lambda^6+1425\lambda^5+1119\lambda^4.
\end{align*}
\noindent We have that $\Lambda_4(K)$ consists of six rooted forests which are all symmetric to one another, as indicated in Figure \ref{sym} below. As each of these rooted forests $(F,R)$ have trivial $H_1(F,R)$, it follows that $\varepsilon_4 = |\Lambda_4(K)| = 6$.

\

\begin{center}
    \captionsetup{type=figure}
    \includegraphics[width=.7\linewidth]{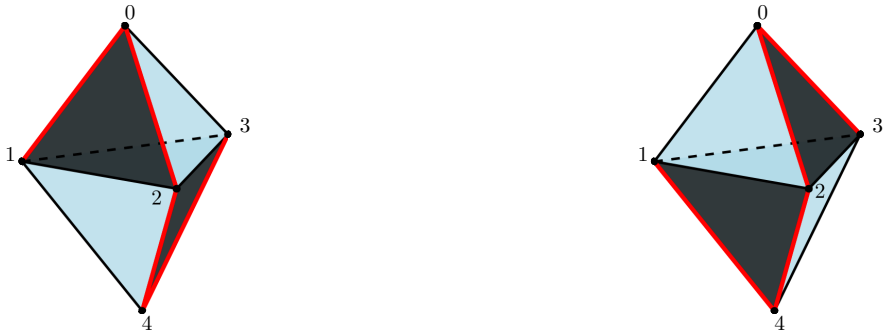} 
    \captionof{figure}{Two of the rooted forests of $\Lambda_4(K)$. The two darkened $2$-cells in both cases indicate simplicies that are not in the forest. The red $1$-cells are the roots. Cyclically permuting the labeling of the vertices $1$, $2$, and $3$ around the equator in both shown rooted forests gives rise to all six elements of $\Lambda_4(K)$.}
    \label{sym}
\end{center}
\end{example}


\section{Future Directions}

We have come to notice through close observations that gradients within discrete Morse theory behave particularly nicely, e.g. satisfy Theorem \ref{main}, when the complex is indeed a triangulation of manifold that may also have had singularities. These complexes go by the name of simplicial \textit{pseudomanifolds}, satisfying the three following properties: for a $d$-dimensional simplicial complex $K$,
\begin{itemize}
    \item The closure of $|K|$ equals a union of $d$-cells.
    \item Each $(d-1)$-cell of $K$ is a face of exactly two $d$-cells of $K$.
    \item Given two $d$-cells $\sigma, \sigma'$ of $K$, there is a sequence of $d$-cells of $K$
    $$\sigma = \sigma_0, \sigma_1, \dots, \sigma_k = \sigma'$$
    such that $\sigma_i \cap \sigma_{i+1}$ is an $(d-1)$-cell for each $i$.
\end{itemize}

We have evidence then to believe the following conjecture is true and intend on releasing a future work addressing this connection in further detail.

\begin{conjecture}
Let $K$ be a $d$-dimensional simplicial complex. Then $K$ is an orientable pseudomanifold if and only if
\begin{equation*} \tag{$\dagger$} \label{maineq}
    det( \Delta_d + \lambda I) = \sum_{i=0}^{n} |\mathcal{M}_{m-i} (K)| \lambda^{n-i},
\end{equation*}
where $m = |K_d|$ and $n = |K_{d-1}|$.
\end{conjecture}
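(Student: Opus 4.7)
The plan is to split the conjecture into its two implications and tackle each separately. For the forward direction, suppose $K$ is an orientable pseudomanifold. By Theorem~\ref{main2}, it suffices to show that every rooted forest $(F,R)$ of $K$ has the property that $F$ simplicially collapses onto $R$. This would extend Mukherjee's \cite[Theorem 4.5]{ORF} from orientable manifolds to orientable pseudomanifolds. I would adapt Mukherjee's argument using the two structural features distinctive to pseudomanifolds: the local condition that every $(d-1)$-face lies in exactly two $d$-faces, and the global strong-connectivity of the dual graph (whose vertices are $d$-cells and whose edges record shared $(d-1)$-faces). Together these ensure $H_d(K) \cong \ZZ$ with a well-defined fundamental class and constrain $\partial_d^K$ enough to produce a collapse schedule inductively: find a \emph{free pair} $(\sigma,\tau) \in \bar{R} \times F$ where $\sigma$ has exactly one coface in $F$, peel it off, and recurse on the smaller rooted forest. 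Existence of such a free pair should follow from the strong connectivity of the dual graph restricted to $F$ combined with orientability.

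For the reverse direction, I would argue by contrapositive using Porism~\ref{pathology}: if $(\dagger)$ holds then every rooted forest $(F,R)$ of $K$ has $F$ simplicially collapsing to $R$, so the goal is to exhibit a non-collapsible rooted forest whenever $K$ fails one of the four defining properties of an orientable pseudomanifold. The non-orientability case already follows from essentially the same argument as the corollary after Porism~\ref{pathology}: a non-orientable pseudomanifold has torsion in $H_d(K)$, so the rooted forest $(F = K_d, R)$ inherits a nontrivial $H_{d-1}(F,R)$ and, by Corollary~\ref{conj}, cannot arise from a gradient. A purity failure (a top cell of dimension less than $d$) can be ruled out by comparing the degree of the polynomial in $(\dagger)$ against $|K_{d-1}|$ and the $d$-face count. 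The remaining cases—some $(d-1)$-face having three or more $d$-cofaces, or failure of strong connectivity in the dual graph—require explicit obstructive constructions that I would build by hand and then verify via Lemma~\ref{last}.

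The main obstacle is the case where some $(d-1)$-face $\rho$ has three or more $d$-cofaces, of which the equatorial bipyramid in Example~\ref{bipyramid} is the paradigmatic instance. The strategy there should generalize: take $F$ to contain all cofaces of $\rho$, producing a column of $\partial_d^F$ along row $\rho$ with three or more nonzero entries, and then select a root $R$ with $\rho \in \bar{R}$ so that the coface redundancy forces any candidate fitting orientation to contain a directed cycle in the Hasse diagram—equivalently, so that $F$ fails to simplicially collapse onto $R$ by Theorem~\ref{collapse}. The contrapositive form of Lemma~\ref{last} would then translate this forced cycle back to non-collapsibility. The delicate part is performing this construction uniformly across all $K$ violating the two-coface axiom, while simultaneously controlling the interaction with the global topology of $K$; one expects this case analysis (together with the strong-connectivity failure, which should admit a cleaner component-wise argument) to occupy the bulk of the technical work.
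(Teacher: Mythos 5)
This statement is an open conjecture in the paper --- the authors explicitly defer it to future work and give no proof --- so there is no argument of theirs to compare yours against, and measured on its own terms your proposal is a research plan rather than a proof. In the forward direction the entire content of the conjecture is concentrated in the one claim you leave unproved: that every rooted forest $(F,R)$ of an orientable pseudomanifold admits a simplicial collapse of $F$ onto $R$, i.e.\ the extension of Mukherjee's collapsibility theorem from orientable manifolds to orientable pseudomanifolds. Saying that a free pair ``should follow from the strong connectivity of the dual graph restricted to $F$ combined with orientability'' is precisely the step that needs an argument, and the homological data you invoke cannot supply it: the paper's own Example \ref{bipyramid} exhibits a pure, strongly connected complex with rooted forests $(F,R)$ satisfying $H_{d-1}(F,R)=0$ that nevertheless fail to collapse, so a fundamental class and vanishing relative homology do not by themselves produce a collapse schedule. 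Until the inductive peeling argument is actually carried out, the forward implication is an assertion.

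The reverse direction has a concrete hole. Your case analysis of ``$K$ is not an orientable pseudomanifold'' omits the case of a $(d-1)$-face with exactly \emph{one} $d$-coface, which also violates the two-coface axiom, and in that case there is in general no obstruction to exhibit: the full simplex $\Delta^2$ is connected and pure but not a pseudomanifold (every edge has a single triangular coface), yet $\det(\Delta_2+\lambda I)=\lambda^3+3\lambda^2$ while $|\MM_1(\Delta^2)|=1$ and $|\MM_0(\Delta^2)|=3$, so $(\dagger)$ holds. Hence the contrapositive strategy cannot succeed uniformly, and this example indicates that the ``only if'' direction needs reformulation (e.g.\ allowing pseudomanifolds with boundary) before any proof can exist. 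Beyond that, the two remaining cases you defer to ``explicit obstructive constructions built by hand'' --- a $(d-1)$-face with three or more cofaces, and failure of strong connectivity --- are exactly where the technical work lies and are not done. One further slip: a non-orientable pseudomanifold has torsion in $H_{d-1}(K_d)$, not in $H_d(K)$, which is always free because $K$ has no $(d+1)$-cells.
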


For instance, one can observe that the failure of the M\"obius strip (Example \ref{mobfail}) to be an issue with orientability, whereas the failure of the bipyramid (Example \ref{bipyramid}) as an issue with branching, e.g. the existence of subcomplexes of the forest(s) pictured in the figure above having three $d$-cells share a single $(d-1)$-cell. One may also wonder: if the characteristic polynomials play nicely with gradients in these cases, what else holds true from a discrete Morse theory perspective of pseudomanifolds?

\

We have also had many other interesting questions raised to us, which we invite the reader to explore. For instance, there are many variants of discrete Morse theory that have similar concepts of gradients. One may instead focus on general posets \cite{genposets}, consider instead acyclic matchings on free chain complexes (dubbed \textit{algebraic} Morse theory) \cite{algmorsetheory}, piecewise linear Morse theory (see \cite[Chapter 5]{smoothanddiscrete}), among others. 

\begin{question}
Does a similar statement to $($\ref{maineq}$)$ exist and appear in these other variants of discrete Morse theory?
\end{question}

Discrete Morse theory has been used in various software relating to topological data analysis, such as the \textit{Perseus Software Project for Rapid Computation of Persistent Homology} and the \textit{Computation Homology Project} (CHomP), and one interesting future suggestion is to use this work to pave the way for a deeper understanding of the structure of gradients and allow for improvements in the efficiency of these computations, e.g. developing more optimized algorithms for finding acyclic matchings (see \cite{complexity}).

\begin{question}
Given the indicated structure of subsets of gradients as rooted forests, could this assist in creating new optimized algorithms for determining gradients on simplicial complexes satisfying particular constraints?
\end{question}

On the note of Remark \ref{matroids}, we are also curious about the following.

\begin{question}
Is there a matroid interpretation of $($\ref{maineq}$)$?
\end{question}

Further, recall that the set of independent sets $\mathcal{I}$ of a matroid $M$ form a simplicial complex (the \textit{independence complex} of the matroid).

\begin{question}
If $M=(E,\mathcal{I})$ is a matroid, what do gradients on $\mathcal{I}$ reveal about $M$? In the case of a simplicial matroid $M_K$ for a complex $K$, how does $\det(\Delta_K+\lambda I)$ relate to $\det(\Delta_\mathcal{I}+\lambda I)$?
\end{question}

The Laplacian we discussed here is only ``half" of what is known as the \textit{combinatorial} Laplacian, defined as $\mathcal{L}_n := \partial_{n+1} \partial_{n+1}^T + \partial_n^T \partial_n$ for a $d$-dimensional simplicial complex, where $n\leq d-1$. Though we were unable to find a connection between discrete gradient vector fields and $\det(\mathcal{L}_n+\lambda I)$, it is suspected one may exist. This formulation of the Laplacian is particularly interesting, as for instance, one has that $H_n(K) \cong \ker(\Delta_n)$ due to the combinatorial Hodge decomposition theorem. 

Lastly, there exists a connection between discrete Morse theory and Adinkras: a decorated graph originating from the study of supersymmetric algebras in particle physics (see \cite{adinkra}). In particular, the height function on an Adinkra induces discrete Morse functions for surfaces that are supported on Adinkra graphs.

\begin{question}
What can be said about Adrinkas with the aid of discrete gradient vector fields and their connection to the Laplacian?
\end{question}



\

\nocite{*}

\printbibliography

@article {ayala,
    AUTHOR = {Ayala, Rafael and Fern\'{a}ndez, Luis M. and Quintero, Antonio and Vilches,
              Jos\`{e} A.},
     TITLE = {A note on the pure {M}orse complex of a graph},
   JOURNAL = {Topology Appl.},
  FJOURNAL = {Topology and its Applications},
    VOLUME = {155},
      YEAR = {2008},
    NUMBER = {17-18},
     PAGES = {2084--2089},
      ISSN = {0166-8641},
   MRCLASS = {57M15 (57M20)},
  MRNUMBER = {2457993},
       DOI = {10.1016/j.topol.2007.04.023},
       URL = {https://doi.org/10.1016/j.topol.2007.04.023},
}

@article {gradients,
    AUTHOR = {Ayala, Rafael and Fern\'{a}ndez, Luis M. and Vilches, Jos\`{e} A.},
     TITLE = {Characterizing equivalent discrete {M}orse functions},
   JOURNAL = {Bull. Braz. Math. Soc. (N.S.)},
  FJOURNAL = {Bulletin of the Brazilian Mathematical Society. New Series.
              Boletim da Sociedade Brasileira de Matem\'{a}tica},
    VOLUME = {40},
      YEAR = {2009},
    NUMBER = {2},
     PAGES = {225--235},
      ISSN = {1678-7544},
   MRCLASS = {57Q15 (57M20)},
  MRNUMBER = {2511547},
       DOI = {10.1007/s00574-009-0010-3},
       URL = {https://doi.org/10.1007/s00574-009-0010-3},
}

@article {higherdimforests,
    AUTHOR = {Bernardi, Olivier and Klivans, Caroline J.},
     TITLE = {Directed rooted forests in higher dimension},
   JOURNAL = {Electron. J. Combin.},
  FJOURNAL = {Electronic Journal of Combinatorics},
    VOLUME = {23},
      YEAR = {2016},
    NUMBER = {4},
     PAGES = {Paper 4.35, 20},
   MRCLASS = {05C50 (05C05 05C30 05E45)},
  MRNUMBER = {3604793},
MRREVIEWER = {Seyed Amin Seyed Fakhari},
       DOI = {10.37236/5819},
       URL = {https://doi.org/10.37236/5819},
}

@book {biggs,
    AUTHOR = {Biggs, Norman},
     TITLE = {Algebraic graph theory},
    SERIES = {Cambridge Mathematical Library},
   EDITION = {Second},
 PUBLISHER = {Cambridge University Press, Cambridge},
      YEAR = {1993},
     PAGES = {viii+205},
      ISBN = {0-521-45897-8},
   MRCLASS = {05C50},
  MRNUMBER = {1271140},
MRREVIEWER = {Robin J. Wilson},
}

@article {chari,
    AUTHOR = {Chari, Manoj K. and Joswig, Michael},
     TITLE = {Complexes of discrete {M}orse functions},
   JOURNAL = {Discrete Math.},
  FJOURNAL = {Discrete Mathematics},
    VOLUME = {302},
      YEAR = {2005},
    NUMBER = {1-3},
     PAGES = {39--51},
      ISSN = {0012-365X},
   MRCLASS = {05E25 (05C50)},
  MRNUMBER = {2179635},
MRREVIEWER = {Edward B. Swartz},
       DOI = {10.1016/j.disc.2004.07.027},
       URL = {https://doi.org/10.1016/j.disc.2004.07.027},
}

@article {morsecontreras,
    AUTHOR = {Contreras, Ivan and Xu, Boyan},
     TITLE = {The graph {L}aplacian and {M}orse inequalities},
   JOURNAL = {Pacific J. Math.},
  FJOURNAL = {Pacific Journal of Mathematics},
    VOLUME = {300},
      YEAR = {2019},
    NUMBER = {2},
     PAGES = {331--345},
      ISSN = {0030-8730},
   MRCLASS = {81Q35 (05C50)},
  MRNUMBER = {3990810},
MRREVIEWER = {Nataliia Goloshchapova},
       DOI = {10.2140/pjm.2019.300.331},
       URL = {https://doi.org/10.2140/pjm.2019.300.331},
}

@article {forman,
    AUTHOR = {Forman, Robin},
     TITLE = {A user's guide to discrete {M}orse theory},
   JOURNAL = {S\'{e}m. Lothar. Combin.},
  FJOURNAL = {S\'{e}minaire Lotharingien de Combinatoire},
    VOLUME = {48},
      YEAR = {2002},
     PAGES = {Art. B48c, 35},
   MRCLASS = {57Q05 (57R70)},
  MRNUMBER = {1939695},
MRREVIEWER = {Wolfgang K\"{u}hnel},
}

@article {matching-adj,
    AUTHOR = {Sachs, Horst},
     TITLE = {Beziehungen zwischen den in einem {G}raphen enthaltenen
              {K}reisen und seinem charakteristischen {P}olynomialen},
   JOURNAL = {Publ. Math. Debrecen},
  FJOURNAL = {Publicationes Mathematicae Debrecen},
    VOLUME = {11},
      YEAR = {1964},
     PAGES = {119--134},
      ISSN = {0033-3883},
   MRCLASS = {55.10 (05.40)},
  MRNUMBER = {172271},
MRREVIEWER = {W. T. Tutte},
}

@book {scoville,
    AUTHOR = {Scoville, Nicholas A.},
     TITLE = {Discrete {M}orse theory},
    SERIES = {Student Mathematical Library},
    VOLUME = {90},
 PUBLISHER = {American Mathematical Society, Providence, RI},
      YEAR = {2019},
     PAGES = {xiv+273},
      ISBN = {978-1-4704-5298-8},
   MRCLASS = {57Q10 (55N35 55U10 57Q05 58E05)},
  MRNUMBER = {3970274},
MRREVIEWER = {Matthew C. B. Zaremsky},
       DOI = {10.1090/stml/090},
       URL = {https://doi.org/10.1090/stml/090},
}

@article {ToRF,
    AUTHOR = {Mukherjee, Sajal Kumar},
     TITLE = {On the topology of rooted forests in higher dimensions},
   JOURNAL = {Topology Appl.},
  FJOURNAL = {Topology and its Applications},
    VOLUME = {247},
      YEAR = {2018},
     PAGES = {50--56},
      ISSN = {0166-8641},
   MRCLASS = {57Q10 (05C05 05C50 05E45)},
  MRNUMBER = {3846215},
MRREVIEWER = {Nicholas A. Scoville},
       DOI = {10.1016/j.topol.2018.07.013},
       URL = {https://doi.org/10.1016/j.topol.2018.07.013},
}

@article {ORF,
    AUTHOR = {Mukherjee, Sajal Kumar},
     TITLE = {On the rooted forests in triangulated closed manifolds},
   JOURNAL = {Linear Multilinear Algebra},
  FJOURNAL = {Linear and Multilinear Algebra},
    VOLUME = {68},
      YEAR = {2020},
    NUMBER = {10},
     PAGES = {2034--2043},
      ISSN = {0308-1087},
   MRCLASS = {05C50 (05C05 05E45 57Q70)},
  MRNUMBER = {4160426},
       DOI = {10.1080/03081087.2019.1570066},
       URL = {https://doi.org/10.1080/03081087.2019.1570066},
}

@incollection {cwsimplicial,
    AUTHOR = {Aschbacher, Michael},
     TITLE = {Combinatorial cell complexes},
 BOOKTITLE = {Progress in algebraic combinatorics ({F}ukuoka, 1993)},
    SERIES = {Adv. Stud. Pure Math.},
    VOLUME = {24},
     PAGES = {1--80},
 PUBLISHER = {Math. Soc. Japan, Tokyo},
      YEAR = {1996},
   MRCLASS = {05E25 (06A07 20D99 55U10)},
  MRNUMBER = {1414463},
MRREVIEWER = {Yoav Segev},
       DOI = {10.2969/aspm/02410001},
       URL = {https://doi.org/10.2969/aspm/02410001},
}

@book {smoothanddiscrete,
    AUTHOR = {Knudson, Kevin P.},
     TITLE = {Morse theory},
      NOTE = {Smooth and discrete},
 PUBLISHER = {World Scientific Publishing Co. Pte. Ltd., Hackensack, NJ},
      YEAR = {2015},
     PAGES = {xiv+181},
      ISBN = {978-981-4630-96-2},
   MRCLASS = {57Q10 (57-02 57R70 58E05)},
  MRNUMBER = {3379451},
MRREVIEWER = {Jos\'{e} Antonio Vilches},
       DOI = {10.1142/9360},
       URL = {https://doi.org/10.1142/9360},
}

@article {complexity,
    AUTHOR = {Joswig, Michael and Pfetsch, Marc E.},
     TITLE = {Computing optimal {M}orse matchings},
   JOURNAL = {SIAM J. Discrete Math.},
  FJOURNAL = {SIAM Journal on Discrete Mathematics},
    VOLUME = {20},
      YEAR = {2006},
    NUMBER = {1},
     PAGES = {11--25},
      ISSN = {0895-4801},
   MRCLASS = {90C27 (57Q05 68Q17 90C57)},
  MRNUMBER = {2257241},
MRREVIEWER = {Winfried Hochst\"{a}ttler},
       DOI = {10.1137/S0895480104445885},
       URL = {https://doi.org/10.1137/S0895480104445885},
}

@article {sheaf,
    AUTHOR = {Curry, Justin and Ghrist, Robert and Nanda, Vidit},
     TITLE = {Discrete {M}orse theory for computing cellular sheaf
              cohomology},
   JOURNAL = {Found. Comput. Math.},
  FJOURNAL = {Foundations of Computational Mathematics. The Journal of the
              Society for the Foundations of Computational Mathematics},
    VOLUME = {16},
      YEAR = {2016},
    NUMBER = {4},
     PAGES = {875--897},
      ISSN = {1615-3375},
   MRCLASS = {55N30 (55U15 57Q10 68U05 68W30)},
  MRNUMBER = {3529128},
MRREVIEWER = {Matthew C. B. Zaremsky},
       DOI = {10.1007/s10208-015-9266-8},
       URL = {https://doi.org/10.1007/s10208-015-9266-8},
}

@article {humanspeech,
    AUTHOR = {Brown, Kenneth A. and Knudson, Kevin P.},
     TITLE = {Nonlinear statistics of human speech data},
   JOURNAL = {Internat. J. Bifur. Chaos Appl. Sci. Engrg.},
  FJOURNAL = {International Journal of Bifurcation and Chaos in Applied
              Sciences and Engineering},
    VOLUME = {19},
      YEAR = {2009},
    NUMBER = {7},
     PAGES = {2307--2319},
      ISSN = {0218-1274},
   MRCLASS = {76Q05 (37M10 55N99 92B05)},
  MRNUMBER = {2572546},
       DOI = {10.1142/S0218127409024086},
       URL = {https://doi.org/10.1142/S0218127409024086},
}

@article {randomcomplexes,
    AUTHOR = {Kahle, Matthew},
     TITLE = {Random geometric complexes},
   JOURNAL = {Discrete Comput. Geom.},
  FJOURNAL = {Discrete \& Computational Geometry. An International Journal
              of Mathematics and Computer Science},
    VOLUME = {45},
      YEAR = {2011},
    NUMBER = {3},
     PAGES = {553--573},
      ISSN = {0179-5376},
   MRCLASS = {52B70 (05C80 55N99 60D05 68W25)},
  MRNUMBER = {2770552},
       DOI = {10.1007/s00454-010-9319-3},
       URL = {https://doi.org/10.1007/s00454-010-9319-3},
}

@article {chip,
    AUTHOR = {Baker, Matthew and Shokrieh, Farbod},
     TITLE = {Chip-firing games, potential theory on graphs, and spanning
              trees},
   JOURNAL = {J. Combin. Theory Ser. A},
  FJOURNAL = {Journal of Combinatorial Theory. Series A},
    VOLUME = {120},
      YEAR = {2013},
    NUMBER = {1},
     PAGES = {164--182},
      ISSN = {0097-3165},
   MRCLASS = {05C57 (05C05 05C50 31C20 65R10)},
  MRNUMBER = {2971705},
MRREVIEWER = {Nick Gravin},
       DOI = {10.1016/j.jcta.2012.07.011},
       URL = {https://doi.org/10.1016/j.jcta.2012.07.011},
}

@article {critgroups,
    AUTHOR = {Duval, Art M. and Klivans, Caroline J. and Martin, Jeremy L.},
     TITLE = {Critical groups of simplicial complexes},
   JOURNAL = {Ann. Comb.},
  FJOURNAL = {Annals of Combinatorics},
    VOLUME = {17},
      YEAR = {2013},
    NUMBER = {1},
     PAGES = {53--70},
      ISSN = {0218-0006},
   MRCLASS = {05E45 (05C21 05C50 55U15 57M15)},
  MRNUMBER = {3027573},
       DOI = {10.1007/s00026-012-0168-z},
       URL = {https://doi.org/10.1007/s00026-012-0168-z},
}

@article {matroidlap,
    AUTHOR = {Kook, W. and Reiner, V. and Stanton, D.},
     TITLE = {Combinatorial {L}aplacians of matroid complexes},
   JOURNAL = {J. Amer. Math. Soc.},
  FJOURNAL = {Journal of the American Mathematical Society},
    VOLUME = {13},
      YEAR = {2000},
    NUMBER = {1},
     PAGES = {129--148},
      ISSN = {0894-0347},
   MRCLASS = {05B35},
  MRNUMBER = {1697094},
MRREVIEWER = {Christian Herrmann},
       DOI = {10.1090/S0894-0347-99-00316-1},
       URL = {https://doi.org/10.1090/S0894-0347-99-00316-1},
}

@article {genposets,
    AUTHOR = {Babson, Eric and Hersh, Patricia},
     TITLE = {Discrete {M}orse functions from lexicographic orders},
   JOURNAL = {Trans. Amer. Math. Soc.},
  FJOURNAL = {Transactions of the American Mathematical Society},
    VOLUME = {357},
      YEAR = {2005},
    NUMBER = {2},
     PAGES = {509--534},
      ISSN = {0002-9947},
   MRCLASS = {05E25 (06A07 55P15)},
  MRNUMBER = {2095621},
MRREVIEWER = {G\'{a}bor Hetyei},
       DOI = {10.1090/S0002-9947-04-03495-6},
       URL = {https://doi.org/10.1090/S0002-9947-04-03495-6},
}

@article {algmorsetheory,
    AUTHOR = {Kozlov, Dmitry N.},
     TITLE = {Discrete {M}orse theory for free chain complexes},
   JOURNAL = {C. R. Math. Acad. Sci. Paris},
  FJOURNAL = {Comptes Rendus Math\'{e}matique. Acad\'{e}mie des Sciences. Paris},
    VOLUME = {340},
      YEAR = {2005},
    NUMBER = {12},
     PAGES = {867--872},
      ISSN = {1631-073X},
   MRCLASS = {18G55 (55U35 57R45)},
  MRNUMBER = {2151775},
MRREVIEWER = {Jason Stuart Hanson},
       DOI = {10.1016/j.crma.2005.04.036},
       URL = {https://doi.org/10.1016/j.crma.2005.04.036},
}

@article {adinkra,
    AUTHOR = {Iga, Kevin and Zhang, Yan X.},
     TITLE = {Structural theory and classification of 2{D} {A}dinkras},
   JOURNAL = {Adv. High Energy Phys.},
  FJOURNAL = {Advances in High Energy Physics},
      YEAR = {2016},
     PAGES = {Art. ID 3980613, 12},
      ISSN = {1687-7357},
   MRCLASS = {81T60},
  MRNUMBER = {3478652},
       DOI = {10.1155/2016/3980613},
       URL = {https://doi.org/10.1155/2016/3980613},
}

@incollection {simptrees,
    AUTHOR = {Duval, Art M. and Klivans, Caroline J. and Martin, Jeremy L.},
     TITLE = {Simplicial and cellular trees},
 BOOKTITLE = {Recent trends in combinatorics},
    SERIES = {IMA Vol. Math. Appl.},
    VOLUME = {159},
     PAGES = {713--752},
 PUBLISHER = {Springer, [Cham]},
      YEAR = {2016},
   MRCLASS = {05E45 (05B35 05C30 55U10 57M15)},
  MRNUMBER = {3526429},
MRREVIEWER = {Fahimeh Khosh-Ahang Ghasr},
       DOI = {10.1007/978-3-319-24298-9\_28},
       URL = {https://doi.org/10.1007/978-3-319-24298-9_28},
}

@article {4cellcomplexes,
    AUTHOR = {Forman, Robin},
     TITLE = {Morse theory for cell complexes},
   JOURNAL = {Adv. Math.},
  FJOURNAL = {Advances in Mathematics},
    VOLUME = {134},
      YEAR = {1998},
    NUMBER = {1},
     PAGES = {90--145},
      ISSN = {0001-8708},
   MRCLASS = {57Q99 (57R70 58E05)},
  MRNUMBER = {1612391},
MRREVIEWER = {Wolfgang K\"{u}hnel},
       DOI = {10.1006/aima.1997.1650},
       URL = {https://doi.org/10.1006/aima.1997.1650},
}

@book {hatcher,
    AUTHOR = {Hatcher, Allen},
     TITLE = {Algebraic topology},
 PUBLISHER = {Cambridge University Press, Cambridge},
      YEAR = {2002},
    %PAGES = {xii+544},
      ISBN = {0-521-79540-0},
   MRCLASS = {55-01 (55-00)},
  MRNUMBER = {1867354},
MRREVIEWER = {Donald W. Kahn},
}

\end{document}